\newcommand{\NN}{\mathbb{N}}
\newcommand{\cC}{{\mathcal{C}}}
\newcommand{\cD}{{\mathcal{D}}}
\newcommand{\cM}{{\mathcal{M}}}
\newcommand{\cB}{{\mathcal{B}}}
\newcommand{\cF}{\mathcal{F}}
\newcommand{\norm}{\Vert}
\newcommand{\bea}{\begin{align}}
\newcommand{\eea}{\end{align}}
\newcommand{\beqa}{\begin{align*}}
\newcommand{\eeqa}{\end{align*}}
\newcommand{\inner}[2]{\langle #1 , #2 \rangle}
\newcommand{\ssl}{\sigma_\lambda}
\DeclareMathOperator{\rank}{rank}
\DeclareMathOperator{\tr}{tr}
\DeclareMathOperator{\Span}{span}
\DeclareMathOperator{\Id}{Id}
\DeclareMathOperator{\supp}{supp}
\newtheorem{theorem}{Theorem}[section]
 \newtheorem{lemma}[theorem]{Lemma}
\theoremstyle{definition}
\newtheorem{definition}[theorem]{Definition}
\theoremstyle{remark}
\numberwithin{equation}{section}
\newcommand{\bbB}{{\mathbb B}}
\newcommand{\bbC}{{\mathbb C}}
\newcommand{\bbD}{{\mathbb D}}
\newcommand{\bbH}{{\mathbb H}} 
\newcommand{\bbN}{{\mathbb N}}
\newcommand{\bbR}{{\mathbb R}}
\def\bU{{\mathbf U}}
\def\cB{{\mathcal B}}
\def\cC{{\mathcal C}}
\def\cD{{\mathcal D}}
\def\cF{{\mathcal F}}
\def\cH{{\mathcal H}}
\def\cL{{\mathcal L}}
\def\cM{{\mathcal M}}
\def\cU{{\mathcal U}}
\def\sS{{\mathscr S}}
\def\Re{\operatorname{Re}}
\def\Im{\operatorname{Im}}
\def\z{\zeta} 
\def\ov{\overline}
\def\p{\partial}
\def\ms{\medskip}
\def\Hol{\operatorname{Hol}}
\def\tr{\operatorname{tr}}
\def\HS{\operatorname{HS}}
\def\ran{\operatorname{ran}}
\def\Dom{\operatorname{Dom}}
\def\half{\textstyle{\frac12}}
 \address{Dipartimento di Matematica, Alma Mater Studorium Universit\`a di
  Bologna, Piazza di Porta San Donato 5, 40126 Bologna, Italy}
  \email{{\tt nicola.arcozzi@unibo.it}}
 \address{Dipartimento di Matematica, Alma Mater Studorium Universit\`a di
  Bologna, Piazza di Porta San Donato 5, 40126 Bologna, Italy}
  \email{{\tt nikolaos.chalmoukis2@unibo.it}}
    \address{Dipartimento di Matematica e Applicazioni, Universit\`a degli Studi di
  Milano--Bicocca, Via R. Cozzi 55, 20126 Milano, Italy}
\email{{\tt alessandro.monguzzi@unimib.it}}
\address{Dipartimento di Matematica, Universit\`a degli Studi di
  Milano, Via C. Saldini 50, 20133 Milano, Italy}
  \email{{\tt marco.peloso@unimi.it}}
  \address{Dipartimento di Matematica, Universit\`a degli Studi di
  Milano, Via C. Saldini 50, 20133 Milano, Italy}
\email{{\tt maura.salvatori@unimi.it}}
\keywords{Siegel upper half-space, holomorphic function spaces, Drury--Arveson, von Neumann inequality.}
\thanks{{\em Math Subject Classification 2020:} 47A13, 47B37, 46E22 .}
\thanks{All the authors are members of INdAM.  The first and
  third author are partially supported by the Progetto GNAMPA
  2020 {\em Alla
frontiera tra l’analisi complessa in pi\`u variabili e l’analisi
armonica}. The
  fourth and fifth author are  partially
  supported by the Progetto GNAMPA 2020 \emph{Fractional
    Laplacians and subLaplacians on Lie groups and trees}. }
\title[The Drury--Arveson space and a von Neumann type inequality]{The Drury--Arveson space on the Siegel upper half-space and a von Neumann type inequality}
\author[N. Arcozzi, N. Chalmoukis, A. Monguzzi, M. M. Peloso, M. Salvatori]{Nicola
  Arcozzi, Nikolaos Chalmoukis,
Alessandro Monguzzi, Marco M. Peloso and Maura Salvatori}
\date{}
\begin{document}

\begin{abstract}
 In this work we study what we call Siegel--dissipative vector of
 commuting operators $(A_1,\ldots, A_{d+1})$ on a Hilbert space $\cH$
 and we obtain a von Neumann type inequality which involves the
 Drury--Arveson space $DA$ on the Siegel upper half-space
 $\cU$. The operator $A_{d+1}$ is allowed to be unbounded and it
 is the infinitesimal generator of a contraction semigroup
 $\{e^{-i\tau A_{d+1}}\}_{\tau<0}$. We then study the operator
 $e^{-i\tau A_{d+1}}A^{\alpha}$ where $A^{\alpha}=A_1^{\alpha_1}\cdots
 A^{\alpha_d}_d$ for $\alpha\in\bbN_0^d$ and prove that can be studied
 by means of  model operators on a weighted $L^2$ space.  
 To prove our results we obtain a Paley--Wiener type theorem for $DA$ and we investigate some multiplier operators on $DA$ as well.
\end{abstract}
\maketitle

\section{Introduction}
The Drury--Arveson space $DA(\bbB^{d+1})$ on the unit ball of $\bbC^{d+1}$ is a
renowned  Hilbert space of holomorphic functions endowed with the inner product
$$
\langle \sum_{\alpha\in\bbN_0^{d+1}}b_\alpha
z^\alpha,\sum_{\alpha\in\bbN_0^{d+1}}c_\alpha z^\alpha\rangle_{DA(\bbB^{d+1})}
:=\sum_{\alpha\in\bbN_0^{d+1}}b_\alpha\overline{c_\alpha}\frac{\alpha!}{|\alpha|!}.
$$
We refer the reader to \cite[Theorem 6.1]{AMPS} for an exact integral
representation of such inner product. The space $DA(\bbB^{d+1})$
is a reproducing kernel Hilbert space with kernel
$K(z,w)=(1-z\cdot\overline w)^{-1}$ and may be considered \emph{the} natural multi-dimensional version of the Hardy space on the
unit disc $H^2(\bbD)$;  one of the main reasons 
being that $DA(\bbB^{d+1})$ plays the same role as $H^2(\bbD)$ in a multi-dimensional
version of the famous von Neumann Inequality. 
\begin{theorem}[Drury \cite{Drury}]\label{D-vN-thm}
Let $\cH$ be a Hilbert space and consider the $(d+1)$-tuple
$T=(T_1,\ldots, T_{d+1})$ of linear operators on $\cH$
satisfying
\begin{enumerate}
    \item[(i)] $T_jT_k-T_kT_j=0$ for all $j,k=1,\ldots,d+1$;
    \item[(ii)] $\sum_{j=1}^{d+1}\|T_j v\|_{\cH}^2\leq \|v\|^2_{\cH}$ for all $v\in\cH$.
\end{enumerate}
Let $p(z)=p(z_1,\ldots,z_{d+1})$ be a complex polynomial. Then,
$$
\|p(T)\|_{_{\cB(\cH)}}\leq \|p\|_{\cM(DA(\bbB^{d+1}))}
$$
where $\|\cdot\|_{\cB(\cH)}$ and $\|\cdot\|_{\cM(DA(\bbB^{d+1}))}$ denote the
norm of bounded linear operators on $\cH$ and the multiplier operator
norm on the Drury--Arveson space on the unit ball respectively. 
\end{theorem}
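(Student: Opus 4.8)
The strategy is to dilate $T$ so that $p(T)$ becomes a compression, to a co-invariant subspace, of the multiplication operator $M_p$ acting on a vector-valued Drury--Arveson space; since compressions do not increase operator norms and $\|M_p\|_{\cB(DA(\bbB^{d+1}))}=\|p\|_{\cM(DA(\bbB^{d+1}))}$, the estimate will follow. First I would reduce to the case of a \emph{row} contraction. Condition (ii) says $\sum_j T_j^*T_j\le I_\cH$, hence the adjoint tuple $T^*=(T_1^*,\dots,T_{d+1}^*)$ is commuting and satisfies $\sum_j T_j^*(T_j^*)^*\le I_\cH$. Writing $\widetilde p$ for the polynomial obtained from $p$ by conjugating its coefficients, one has $p(T)^*=\widetilde p(T^*)$; moreover the conjugate-linear isometry $f(z)\mapsto\overline{f(\bar z)}$ of $DA(\bbB^{d+1})$ intertwines $M_p$ with $M_{\widetilde p}$, so $\|\widetilde p\|_{\cM(DA(\bbB^{d+1}))}=\|p\|_{\cM(DA(\bbB^{d+1}))}$. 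Thus it suffices to show: if $S=(S_1,\dots,S_{d+1})$ is a commuting tuple with $\sum_j S_jS_j^*\le I$, then $\|q(S)\|_{\cB(\cH)}\le\|q\|_{\cM(DA(\bbB^{d+1}))}$ for every polynomial $q$.

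Next I would treat the \emph{pure} case. Let $\Phi(X)=\sum_{j=1}^{d+1}S_jXS_j^*$, a completely positive map on $\cB(\cH)$ with $\Phi(I)\le I$; then $(\Phi^n(I))_n$ decreases to some $Q\ge0$, and $S$ is called pure when $Q=0$. Assume $Q=0$, set $D=(I-\sum_j S_jS_j^*)^{1/2}$, $\cE=\overline{D\cH}$, and define $W\colon\cH\to DA(\bbB^{d+1})\otimes\cE$ by
\[
(Wh)(z)=\sum_{\alpha\in\bbN_0^{d+1}}\frac{|\alpha|!}{\alpha!}\,z^\alpha\otimes D(S^*)^\alpha h .
\]
The combinatorial core is the identity $\Phi^n(X)=\sum_{|\alpha|=n}\frac{n!}{\alpha!}S^\alpha X(S^*)^\alpha$ for all $X$, which holds because the $S_j$ commute (hence so do the $S_j^*$); combined with $\|z^\alpha\|^2_{DA}=\alpha!/|\alpha|!$ and $D^2=I-\Phi(I)$ it produces the telescoping identity $\sum_{|\alpha|\le N}\frac{|\alpha|!}{\alpha!}\|D(S^*)^\alpha h\|^2=\|h\|^2-\langle\Phi^{N+1}(I)h,h\rangle$, so purity forces $W$ to be an isometry. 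Using $M_{z_j}^*z^\beta=\frac{\beta_j}{|\beta|}z^{\beta-e_j}$ on $DA(\bbB^{d+1})$, a reindexing yields the intertwining $WS_j^*=(M_{z_j}^*\otimes I)W$; thus $W\cH$ is co-invariant for $M_z\otimes I$ and $S_j=W^*(M_{z_j}\otimes I)W$. Because $(W\cH)^\perp$ is invariant under every $M_{z_j}\otimes I$, we have $P_{W\cH}(M_{z_j}\otimes I)(I-P_{W\cH})=0$, and induction on $|\alpha|$ gives $S^\alpha=W^*(M_{z^\alpha}\otimes I)W$; hence $q(S)=W^*(M_q\otimes I)W$ and $\|q(S)\|\le\|M_q\|_{\cB(DA(\bbB^{d+1}))}=\|q\|_{\cM(DA(\bbB^{d+1}))}$.

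Finally, the general case follows by scaling: for $0<r<1$ the tuple $rS$ is a commuting row contraction whose associated map is $r^2\Phi$, and $(r^2\Phi)^n(I)=r^{2n}\Phi^n(I)\le r^{2n}I\to0$ in norm, so $rS$ is pure; the pure case gives $\|q(rS)\|\le\|q\|_{\cM(DA(\bbB^{d+1}))}$, and since $q(rS)=\sum_\alpha c_\alpha r^{|\alpha|}S^\alpha\to q(S)$ in $\cB(\cH)$ as $r\to1$ (a finite sum of fixed operators), we conclude $\|q(S)\|\le\|q\|_{\cM(DA(\bbB^{d+1}))}$. I expect the main obstacle to be the pure case: identifying the exact normalisation $|\alpha|!/\alpha!$ that makes $W$ isometric, proving the multinomial operator identity (which genuinely requires commutativity and fails without it), and checking that compressing $M_q\otimes I$ to the co-invariant subspace $W\cH$ reproduces $q$ of the compressed operators. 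Alternative routes are Arveson's dilation theorem for commuting row contractions --- writing $S$ as its pure part plus a spherical unitary $U$ and noting $\|q(U)\|=\sup_{\sigma(U)}|q|\le\sup_{\overline{\bbB^{d+1}}}|q|\le\|q\|_{\cM(DA(\bbB^{d+1}))}$, the last inequality since $M_q^*K_w=\overline{q(w)}K_w$ --- or Drury's original argument, which replaces the dilation by a direct proof that a certain operator matrix is positive semidefinite.
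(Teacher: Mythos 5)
Your proof is correct. Note first that the paper does not actually prove this statement: Theorem \ref{D-vN-thm} is quoted as Drury's result and used only as motivation, so there is no internal proof to compare against. What you have written is the standard dilation proof of Drury's inequality (in the spirit of Arveson's model theory): the reduction from the column condition $\sum_j T_j^*T_j\le I$ to a commuting row contraction via adjoints, the isometry $W$ built from the defect operator with the normalisation $|\alpha|!/\alpha!$ matching $\|z^\alpha\|^2_{DA}=\alpha!/|\alpha|!$, the telescoping identity coming from $\Phi^n(X)=\sum_{|\alpha|=n}\frac{n!}{\alpha!}S^\alpha X(S^*)^\alpha$, the intertwining $WS_j^*=(M_{z_j}^*\otimes I)W$, and the removal of the purity hypothesis by the scaling $S\mapsto rS$ are all sound; I checked the key computations and they are consistent with the stated reproducing-kernel normalisation. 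It is worth observing that your argument is precisely the bounded-domain template of which the paper's own main results are the Siegel half-space analogue: the map $\Theta$ of Theorem \ref{lifting-thm} plays the role of your $W$ (an isometric embedding whose norm identity is proved by a telescoping/integration-by-parts argument driven by the dissipativity condition), the operators $S_{\gamma,\tau}^*\otimes\Id$ play the role of $M_{z^\gamma}^*\otimes I$, and the commuting diagram \eqref{diagram-Siegel} is the analogue of your intertwining relation, after which the norm inequality of Theorem \ref{thm-vonNeumann} follows exactly as in your last display, with the $\varepsilon$-regularisation $A_{d+1}\mapsto A_{d+1}+i\varepsilon\Id$ playing the role of your scaling $S\mapsto rS$. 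The one genuine difference is that Drury's original paper argues via positivity of a certain operator matrix rather than via an explicit dilation, as you note in your closing aside; your route is the more transparent one and is the one the authors themselves emulate in the unbounded setting.
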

In the case $d=0$ Drury's result reduces to the classical von Neumann Inequality (\cite{vonNeumann}) and the multiplier operator norm on $H^2(\bbD)$ takes the place of $\|\cdot\|_{\cM(DA(\mathbb B^{d+1}))}$. Namely, given a Hilbert space $\cH$ and $T:\cH\to \cH$ a contraction, i.e., $\|T\|_{\cB(\cH)}\leq 1$, the von Neumann Inequality states that, for any polynomial $p(z)$, 
\begin{equation*}
\|p(T)\|_{\cB(\cH)}\leq \|p\|_{\cM(H^2(\bbD))}=\|p\|_{\infty}.
\end{equation*}

It is possible to prove an 
analogous result modeled on the upper-half plane, which is the
unbounded biholomorphic realization of the unit disk $\bbD$ via the Cayley transform.
Let $A$ be any 
 bounded dissipative operator on
a Hilbert space  $\cH$, that is, such  that $\frac1i (A-A^*)\ge0$,
then 
\begin{equation*}
   \|f(A)\|_{\cB(\cH)}\leq \sup_{\Im z>0}|f(z)|
\end{equation*}
for any  rational function $f$ which is bounded on the upper
half-plane, see \cite{Waksman}.   

The main goal of this paper is to prove a version of Theorem \ref{D-vN-thm} on an unbounded biholomorphic realization of
the unit ball $\bbB^{d+1}$ in $\bbC^{d+1}$, that is,
 the Siegel upper-half space $\cU$, 
\begin{equation}\label{Siegel-domain}
\cU=\left\{(\z,\z_{d+1}):\bbC^{d+1}: \Im\z_{d+1}> \textstyle{\frac14}|\z|^2\right\}.
\end{equation}
The biholomorphism between the ball and the Siegel half-space is given by the multi-dimensional Cayley transform
$\cC:\mathbb B^{d+1}\to \cU$, 
\begin{equation}\label{cayley_transform}
\mathcal C(\omega,\omega_{d+1})=\bigg(\frac{2\omega}{1-\omega_{d+1}},i\frac{1+\omega_{d+1}}{1-\omega_{d+1}}\bigg).
\end{equation}
The Drury--Arveson space on $\cU$, that we shall simply denote by $DA$, was studied
in \cite{ARS} and \cite{AMPS}, where an integral expression for the norm was also obtained.
Let $\rho(\z,\z_{d+1})=\Im\z_{d+1}-\frac14|\z|^2$ and let $n$ be an integer such that $n>d/2$. Then, we define the space $DA_{(n)}$ as
\begin{align}\label{DA-m-def} \nonumber
 DA_{(n)}=\Big\{ 
F\in\Hol(\cU) :&
{\rm (i)}  \displaystyle { \lim_{|\z|\le R,\, \Im\z_{d+1}\to+\infty} 
  F(\z,\z_{d+1}) =0} \,;& \smallskip  \\
&   {\rm (ii)}   \displaystyle {\int_\cU |\rho^n(\z,\z_{d+1}) \p_{\z_{d+1}}^n F(\z,\z_{d+1})|^2 \,
\rho^{-d-1}(\z,\z_{d+1}) d\z d\z_{d+1} <+\infty  }\, \Big\}\,. 
\end{align}
The space $DA_{(n)}$ does not actually depend on $n$, so that we simply write $DA$ in place of $DA_{(n)}$. We refer the reader to  Section
\ref{DA-section} for this
and other properties of the space $DA$.

We now describe the operator-theorical condition we shall work with.
\begin{definition}
Let $\cH$ be a Hilbert space and let $(A_1,\ldots, A_{d+1})$ be a
vector of operators on $\cH$ and let $r(iA_{d+1})$ be the resolvent set of $iA_{d+1}$. 
We say that $(A_1,\ldots,A_{d+1})$ is a \emph{Siegel--dissipative vector of commuting operators} if:
\begin{enumerate}
\item[(i)] the operators $A_1,\dots A_d$ are bounded and $(0,+\infty) \subseteq r (i A_{d+1})$;
  \item[(ii)] the operators $A_1, \dots A_d$ commute with each other and they strongly commute with $A_{d+1}$;
\item[(iii)]  the following condition  holds:\footnote{The multiplicative constant $\textstyle{\frac{1}4}$ in
\eqref{Siegel-Inequality} appears because of the definition of $\cU$ we use. The
presence of this constant might be unpleasant but we keep this
definition to be consistent  with the definition in \cite{AMPS}.}
\begin{equation} \label{Siegel-Inequality}
  \Im\inner{A_{d+1}v}{v}_{\cH}\geq \frac14\sum_{i=1}^d \norm{A_i v}\norm^2_{\cH}, \,\,\, \forall v \in \Dom(A_{d+1}).
  \end{equation} 
  \end{enumerate}
\end{definition}
Given two operators $U$ and $T$ 
on $\cH$, $U$ densely defined and closed,  $T$ bounded,  we say that $U$ and $T$ strongly commute if $UT$ is an extension of $TU$. In other words  $T(\cH)
\subseteq \Dom(U)$  and $TUv=UTv$ for all $v \in
\Dom(U)$. 

We point out that from the conditions above and the Lumer--Philips Theorem (Theorem \ref{Lumer-Philips}) it follows that $iA_{d+1}$
is the infinitesimal generator of a (unique) semigroup of contractions
$\!\{e^{-i\tau A_{d+1}}\!\}_{\tau\leq0}$ that commutes with all the
$A_j$'s, see Lemma \ref{commutation}.
 Our von Neumann type inequality reads as follows.
\begin{theorem} \label{thm-vonNeumann}
Let $\cH$ be a Hilbert space and let $(A_1,\ldots,A_{d+1})$ be a
Siegel--dissipative vector of commuting operators.  For any
$\tau_j<0$, $j=1,\dots,d$, set $M_j = e^{-i\tau_j
  A_{d+1}}A_j$ and $m_j(\zeta,\zeta_{d+1}) =  e^{-i\tau_j
  \zeta_{d+1}}\zeta_j$. 
Let $p$ denote any polynomial in $d$ variables.
Then we have that $m_j\in\cM(DA)$ for $j=1,\dots,d$, and
\begin{equation}\label{main-estimate}
  \| p\big( M_1,\dots, M_d)\|_{\cB(\cH)}\leq
  \| p\big( m_1,\dots, m_d)  \|_{\cM(DA)} ,
\end{equation}
where $\cM(DA)$ denotes the space of pointwise multipliers on the
Drury--Arveson space $DA$.  
\end{theorem}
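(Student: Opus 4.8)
The plan is to realize the commuting tuple $(M_1,\dots,M_d)$ as a compression of a ``universal'' tuple of model operators on a $DA$-valued Hilbert space, in the spirit of the dilation-theoretic approach to Theorem~\ref{D-vN-thm}, with two adaptations dictated by the Siegel geometry: the reproducing-kernel/dilation structure of $DA(\bbB^{d+1})$ is replaced by the Paley--Wiener realization of $DA$ on $\cU$, and the defect operator of a row contraction is replaced by the dissipativity inequality \eqref{Siegel-Inequality} together with the contraction semigroup generated by $iA_{d+1}$ (which exists and commutes with $A_1,\dots,A_d$ by Lemma~\ref{commutation} and the Lumer--Philips Theorem~\ref{Lumer-Philips}). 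First I would fix the Paley--Wiener unitary $\cB\colon DA\to\cD$ onto the appropriate weighted $L^2$ space over $(0,+\infty)\times\bbC^d$, the first variable being dual to $\zeta_{d+1}$ (cf. Section~\ref{DA-section}). Under $\cB$, multiplication by $e^{-i\tau\zeta_{d+1}}$ with $\tau<0$ becomes a one-sided translation in the first variable, hence a contraction on $\cD$, while multiplication by $\zeta_j$ becomes an explicit densely defined operator in the $\bbC^d$ variables; therefore $m_j=e^{-i\tau_j\zeta_{d+1}}\zeta_j$ corresponds, via $\cB$, to the composition $\widetilde M_j:=\cB\,M_{m_j}\,\cB^{-1}$ of such a translation with such an operator, where $M_{m_j}F=m_jF$ on $DA$. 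That $\widetilde M_j$ is bounded on $\cD$ — a computation with the explicit weight, of the same nature as the analysis of the operators $e^{-i\tau A_{d+1}}A^\alpha$ carried out earlier in the paper — is exactly the assertion $m_j\in\cM(DA)$; the $\widetilde M_j$ commute, $p(\widetilde M_1,\dots,\widetilde M_d)=\cB\,M_{p(m_1,\dots,m_d)}\,\cB^{-1}$, and hence $\|p(\widetilde M_1,\dots,\widetilde M_d)\|_{\cB(\cD)}=\|p(m_1,\dots,m_d)\|_{\cM(DA)}$ for every polynomial $p$.

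The heart of the proof is the construction of a contractive intertwiner. Let $\Delta\ge0$ be the (possibly unbounded) self-adjoint operator on $\cH$ associated with the closure of the nonnegative form $v\mapsto\Im\inner{A_{d+1}v}{v}_\cH-\tfrac14\sum_{i=1}^d\|A_iv\|_\cH^2$, $v\in\Dom(A_{d+1})$, coming from \eqref{Siegel-Inequality}, and set $\cE=\overline{\ran\,\Delta^{1/2}}$. Using the semigroup $\{e^{-i\tau A_{d+1}}\}_{\tau\le0}$, the bounded commuting operators $A_1,\dots,A_d$, and $\Delta^{1/2}$, I would build a map $W\colon\cH\to\cD\otimes\cE$ whose Paley--Wiener coefficients are obtained by flowing $v$ under $e^{i\sigma A_{d+1}}$ in the first variable and applying the $A_j$'s in the $\bbC^d$-directions, normalized so that
\begin{equation*}
W M_j=(\widetilde M_j^*\otimes I_\cE)\,W,\qquad j=1,\dots,d .
\end{equation*}
The key point is the norm identity
\begin{equation*}
\|Wv\|_{\cD\otimes\cE}^2=\|v\|_\cH^2-\lim_{\sigma\to+\infty}\|e^{i\sigma A_{d+1}}v\|_\cH^2\ \le\ \|v\|_\cH^2 ,
\end{equation*}
which I would obtain by iterating, across the $\bbN_0^d$-indexed horizontal directions, the one-step identity $\|v\|_\cH^2-\|e^{iT A_{d+1}}v\|_\cH^2=\int_0^T\big(2\inner{\Delta e^{i\sigma A_{d+1}}v}{e^{i\sigma A_{d+1}}v}_\cH+\tfrac12\sum_i\|A_ie^{i\sigma A_{d+1}}v\|_\cH^2\big)\,d\sigma$ — itself a consequence of $\tfrac{d}{d\sigma}\|e^{i\sigma A_{d+1}}v\|_\cH^2=-2\Im\inner{A_{d+1}w}{w}_\cH$ with $w=e^{i\sigma A_{d+1}}v$ together with \eqref{Siegel-Inequality}; here \eqref{Siegel-Inequality} is exactly what makes $\sigma\mapsto\|e^{i\sigma A_{d+1}}v\|_\cH^2$ nonincreasing and makes the iterated sum converge to the stated right-hand side. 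In particular $W$ is a contraction, and it is an isometry precisely when $e^{i\sigma A_{d+1}}\to0$ strongly as $\sigma\to+\infty$.

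To secure the isometry property, replace $A_{d+1}$ by $A_{d+1}+i\eps$ with $\eps>0$: the tuple $(A_1,\dots,A_d,A_{d+1}+i\eps)$ is again Siegel--dissipative (immediate from \eqref{Siegel-Inequality}), its semigroup $\{e^{-\eps|\tau|}e^{-i\tau A_{d+1}}\}_{\tau\le0}$ tends strongly to $0$, and the corresponding operators and symbols are $M_j^\eps=e^{\eps\tau_j}M_j$ and $m_j^\eps=e^{\eps\tau_j}m_j$; hence both sides of \eqref{main-estimate} depend continuously on $\eps\ge0$ (using $m_j\in\cM(DA)$ from the first step), and it suffices to prove \eqref{main-estimate} for each fixed $\eps>0$. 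For such an $\eps$ the previous step produces an \emph{isometry} $W_\eps$ with $W_\eps M_j^\eps=((\widetilde M_j^\eps)^*\otimes I_\cE)W_\eps$; iterating and using commutativity, $W_\eps\,p(M_1^\eps,\dots,M_d^\eps)=\big((\overline p(\widetilde M_1^\eps,\dots,\widetilde M_d^\eps))^*\otimes I_\cE\big)W_\eps$, where $\overline p$ denotes the polynomial with conjugated coefficients, and therefore, since $W_\eps$ is isometric,
\begin{equation*}
\|p(M_1^\eps,\dots,M_d^\eps)\|_{\cB(\cH)}\le\|\overline p(\widetilde M_1^\eps,\dots,\widetilde M_d^\eps)\|_{\cB(\cD)}=\|\overline p(m_1^\eps,\dots,m_d^\eps)\|_{\cM(DA)}=\|p(m_1^\eps,\dots,m_d^\eps)\|_{\cM(DA)} ,
\end{equation*}
the last equality because $\|\cdot\|_{\cM(DA)}$ is invariant under the conjugation symmetry of $DA$. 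Letting $\eps\to0^+$ yields \eqref{main-estimate}.

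I expect the main obstacle to be the second step: writing $W$ in Paley--Wiener coordinates so that its coefficients match the explicit model operators $\widetilde M_j$ exactly (so that the intertwining $WM_j=(\widetilde M_j^*\otimes I_\cE)W$ literally holds), and then making the telescoping/norm identity rigorous despite the unboundedness of $A_{d+1}$ — handling the domain of $\Delta^{1/2}$, differentiation under the series, and convergence of the $\bbN_0^d$-indexed sum. Identifying the $\widetilde M_j$ and proving $m_j\in\cM(DA)$ (the first step) is a concrete but delicate computation with the explicit Paley--Wiener weight; once the model operators and the contractive intertwiner are in place, the reduction to the stable case and the final norm comparison are routine.
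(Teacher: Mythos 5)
Your proposal is correct and follows essentially the same route as the paper: the model operators $\widetilde M_j$ are the operators $S_{\gamma,\tau}$ of Theorem \ref{shift-adjoint} (obtained from the Paley--Wiener realization of Theorem \ref{isometry-Phi} and the multiplier bound of Lemma \ref{boundedmult}), your intertwiner $W$ into $\cD\otimes\cE$ with $\cE=\overline{\ran\,\Delta^{1/2}}$ is exactly the paper's map $\Theta$ into $L^2(\bbN_0^d\times\bbR_-,d\mu;\cH_\Delta)$ of Theorem \ref{lifting-thm}, whose isometry is proved by the same telescoping/integration-by-parts identity driven by \eqref{Siegel-Inequality}, and the reduction of the general case to the strongly Siegel--dissipative one via $A_{d+1}+i\varepsilon\Id$ and a limit in $\varepsilon$ is the paper's final step verbatim.
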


A few remarks are now in order. The space $DA$ is
 a reproducing kernel Hilbert space, and, with respect to the $DA_{(n)}$-norm, its reproducing kernel is
\begin{equation*}
K_{DA}((\omega,\omega_{d+1}),(\z,\z_{d+1)}) = \gamma_{d,n} 
\Big( \frac{\omega_{d+1}-\ov\z_{d+1}}{2i} -
\textstyle{\frac{1}{4}} \omega\cdot \ov{\z} 
\Big)^{-1} \,,
\end{equation*}
where $\gamma_{d,n}=
\frac{4^n}{(4\pi)^{d+1}\Gamma(2n-d)}$.
Hence, notice that the condition \eqref{Siegel-Inequality} is modelled after
 the reproducing kernel of $DA$. In general, it is a well
 established phenomenon that von Neumann type inequalities hold as
 long as one can suitably interpret the condition $K^{-1}(A,A)\geq 0$,
 where $K$ is the reproducing kernel of a space of holomorphic
 functions in some domain in $\bbC^d$. A precise formulation of this
 result is stated and proved in \cite{Ambrozie}. There the authors use
 the Dunford-Riesz functional calculus to define $K^{-1}(A, A)$. This
 is possible since they are working with bounded
 operators with some additional assumptions on their spectrum. Furthermore, they assume that the multiplication by any
 coordinate function is a continuous (bounded) operation. Our setting
 differs in these two latter aspects. The last operator $A_{d+1}$ is
 allowed to be unbounded and also the multiplication by any coordinate
 is an unbounded operation in $DA$. 

We also mention that, in the case when all the operators are 
bounded,  a proof of the von Neumann type inequality
\eqref{main-estimate} could probably be obtained by means of the Cayley transform,
the classical inequality of Drury on the unit ball and some classical
results in operator theory as it was pointed out to us by M. Hartz in
a private communication \cite{Hartz}. Nevertheless, in the footsteps of Drury's proof, we prefer to
use a direct approach, without relying on the known results on the unit ball, for two main reasons.
First, we have greater generality by allowing one operator to be
unbounded. Second, and more importantly, we 
 develop some machinery we believe it is interesting
in its own right and has the potential to be used in the context of
the theory of shift-invariant subspaces of $DA$, as in the spirit of
Buerling--Lax's Theorem \cite{Lax-Acta}, 
in the Siegel half-space setting.

In order to follow such a plan, we further investigate some
operator-theorical properties of $DA$.   
On the space $\bbN_0^d\times\bbR_-$ we consider the measure
\begin{equation}\label{measure-mu}
d\mu(\alpha,\lambda) 
=\alpha!\left(\frac 2{|\lambda|}\right)^{|\alpha|}|\lambda|^{2d}\, d\alpha\, d\lambda
\end{equation}
with $d\lambda$ denoting the Lebesgue measure on $\bbR_{-}$ and $d\alpha$ the counting measure on $\bbN_0^{d}$.
\begin{theorem}\label{isometry-Phi}
The map $\sS:\, L^2(\bbN_0^d\times\bbR_-,\,d\mu)\to DA$ defined as 
\begin{equation*}
(\sS\varphi) (\zeta,\zeta_{n+1})   =  \frac{1}{(2\pi)^{d+1}} \int_{\bbN_0^d\times\bbR_-}  \z^\alpha e^{-i\lambda\z_{d+1}} \overline{\varphi(\alpha,\lambda)}\,|\lambda|^d\, d\alpha d\lambda
\end{equation*}
is a conjugate linear surjective isometry. The inverse map $\sS^{-1}$ is explicitly given in \eqref{map-Phi}. 
\end{theorem}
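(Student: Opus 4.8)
The plan is to realize $DA$ concretely via the Fourier transform in the last variable and the "horizontal" Fourier--type expansion coming from the Heisenberg group structure of $\partial\mathcal U$, reducing everything to a weighted $\ell^2\otimes L^2$ space where the isometry becomes a diagonal multiplication operator that one can read off by inspection. First I would recall from Section \ref{DA-section} the Paley--Wiener type description of $DA$: a holomorphic $F$ on $\mathcal U$ lies in $DA$ iff, writing $F(\zeta,\zeta_{d+1})$ and taking the boundary value as $\Im\zeta_{d+1}\downarrow \tfrac14|\zeta|^2$, one has a representation $F(\zeta,\zeta_{d+1}) = c_{d}\int_{\mathbb R_-} \big(\sum_{\alpha\in\mathbb N_0^d} a_\alpha(\lambda)\,\zeta^\alpha\big) e^{-i\lambda \zeta_{d+1}}\,|\lambda|^d\,d\lambda$, with the $DA_{(n)}$-norm computed by plugging into \eqref{DA-m-def}(ii). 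The point of condition (i) in \eqref{DA-m-def} is exactly that it forces the spectrum of the $\zeta_{d+1}$-Fourier transform into $\mathbb R_-$, so that $F$ decays as $\Im\zeta_{d+1}\to+\infty$; this is where I expect to cite a lemma from the earlier section rather than reprove it.

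Next I would compute the norm. Fixing $\lambda<0$ and integrating $|\rho^n\partial^n_{\zeta_{d+1}}F|^2\rho^{-d-1}$ over a slice $\{\Im\zeta_{d+1} = \tfrac14|\zeta|^2 + t\}$: the $\partial^n_{\zeta_{d+1}}$ brings down a factor $(-i\lambda)^n = (i|\lambda|)^n$, and then one is left with a Gaussian-type integral in $\zeta\in\mathbb C^d$ against $e^{-|\lambda| t/\ldots}$... more precisely, writing $\zeta_{d+1} = x + i(\tfrac14|\zeta|^2+t)$, the factor $e^{-i\lambda\zeta_{d+1}} = e^{-i\lambda x} e^{\lambda(\tfrac14|\zeta|^2+t)}$ has modulus $e^{\lambda(\tfrac14|\zeta|^2 + t)} = e^{-|\lambda|(\tfrac14|\zeta|^2+t)}$. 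Integrating $|\zeta^\alpha|^2 e^{-|\lambda||\zeta|^2/2}$ over $\mathbb C^d$ produces, by the standard Gamma--integral $\int_{\mathbb C^d}|\zeta^\alpha|^2 e^{-s|\zeta|^2}\,d\zeta = \pi^d\,\alpha!\, s^{-|\alpha|-d}$, a factor $\propto \alpha!\,(2/|\lambda|)^{|\alpha|}|\lambda|^{-d}$; integrating $e^{-2|\lambda|t}$ (coming from $\rho = t$) against $t^{2n}\cdot t^{-d-1}\,dt$ over $t\in(0,\infty)$ produces $\Gamma(2n-d)\,(2|\lambda|)^{-(2n-d)}$, which cancels against the $|\lambda|^{2n}$ from the derivative up to the constant $\gamma_{d,n}^{-1}$-type factor and leaves a net $|\lambda|^{2d}$. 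Bookkeeping the constants, the upshot is that $\|F\|_{DA}^2$ equals a universal constant times $\sum_{\alpha}\int_{\mathbb R_-}|a_\alpha(\lambda)|^2\,\alpha!\,(2/|\lambda|)^{|\alpha|}|\lambda|^{2d}\,d\alpha\,d\lambda$, i.e.\ exactly $\|a\|^2_{L^2(d\mu)}$ after matching $a_\alpha(\lambda)$ with $\overline{\varphi(\alpha,\lambda)}$ and absorbing the $(2\pi)^{-(d+1)}$ and $|\lambda|^d$ normalizations into the definition of $\mathcal S$. This computation \emph{is} the content of the theorem, so I would carry the constants carefully here.

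With the norm identity in hand, the three assertions follow cleanly: (1) $\mathcal S$ is well defined and isometric into $DA$ because the integral defining $\mathcal S\varphi$ converges absolutely on compacta (Cauchy--Schwarz against $d\mu$, using that $\zeta^\alpha e^{-i\lambda\zeta_{d+1}}|\lambda|^d$ has finite $L^2(d\mu^{-1})$-type norm locally) and defines a holomorphic function satisfying \eqref{DA-m-def}(i) by dominated convergence as $\Im\zeta_{d+1}\to+\infty$; conjugate-linearity is visible from the complex conjugate on $\varphi$; (2) surjectivity follows from the Paley--Wiener description, since every $F\in DA$ arises from some symbol $a\in L^2(d\mu)$, and then $\varphi := \overline{a}$ (suitably renormalized) maps to $F$; (3) the explicit inverse $\mathcal S^{-1}$ is then forced — it is the map extracting the coefficients $a_\alpha(\lambda)$, realized by first taking the inverse Fourier transform in $x=\Re\zeta_{d+1}$ along a horocycle and then reading off the Taylor coefficients in $\zeta$, and I would simply record the resulting formula as \eqref{map-Phi} and note it is a two-sided inverse by the isometry plus surjectivity (an isometric surjection between Hilbert spaces is invertible with isometric inverse).

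The main obstacle is the norm computation in the second paragraph: one must justify interchanging the $\zeta_{d+1}$-slice integration with the $\alpha$-sum and $\lambda$-integral (Fubini/Tonelli is fine since the integrand is nonnegative once one works with $|{\cdot}|^2$, but one needs the Paley--Wiener representation to even have the $a_\alpha(\lambda)$ in hand, and one needs $n>d/2$ precisely so that $\Gamma(2n-d)$ and the $t$-integral converge), and — more delicately — one must check that the "boundary" expansion $F|_{\partial\mathcal U}$ is legitimate, i.e.\ that $F\in DA$ has nontangential boundary values in the relevant $L^2$ sense and that its horizontal Fourier--Taylor coefficients $a_\alpha(\lambda)$ are supported in $\lambda<0$. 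This last point is exactly where condition \eqref{DA-m-def}(i) enters and is the step I would lean on the earlier Paley--Wiener theorem for, rather than reprove from scratch. Everything else is bookkeeping of Gamma functions and the normalizing constant $\gamma_{d,n}$.
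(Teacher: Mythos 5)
Your proposal is correct in outline but follows a genuinely different route from the paper. You prove the isometry by brute force: substitute the integral representation into the defining norm \eqref{DA-m-def}(ii), integrate slice by slice in $\Re\zeta_{d+1}$, $\zeta$, and $h=\rho$, and extract the weight $\alpha!\,(2/|\lambda|)^{|\alpha|}|\lambda|^{2d}$ from a one-dimensional Plancherel identity, a Gaussian moment integral over $\bbC^d$, and a Gamma integral in $h$ (this is where $n>d/2$ enters). The paper never performs this computation; it takes the operator-valued Plancherel identity \eqref{PW-Dnu-eq2} of Theorem \ref{DA-AMPS} as a black box and reduces the Hilbert--Schmidt norm of $\tau(\lambda)=\sigma_\lambda(f_0)$ to the weighted $\ell^2$ sum by expanding in the orthonormal basis $\{e_\alpha\}$ of the Fock space $\cF^\lambda$ and exploiting the rank-one condition $\ran(\tau(\lambda))\subseteq\Span\{1\}$ (Lemma \ref{isometry-lem}); the inversion formula \eqref{inversion-DA} is then obtained by computing $P_0\sigma_\lambda[z,t]e_\alpha$ explicitly (Lemma \ref{inversione-DA-lem}). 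Your route buys a self-contained derivation of the measure $d\mu$ at the cost of redoing work already encoded in \eqref{PW-Dnu-eq2}; the paper's route is shorter but requires the group-Fourier formalism. The one thin spot in your sketch is surjectivity: saying that every $F\in DA$ ``arises from some symbol $a\in L^2(d\mu)$'' is essentially the statement being proved, and ``reading off the Taylor coefficients'' conceals the step where the operator-valued symbol $\tau(\lambda)$ of Theorem \ref{DA-AMPS} collapses to a scalar sequence --- this uses condition (iii) of Definition \ref{L2-DA} and is precisely the content of the paper's two lemmas, so you should cite Theorem \ref{DA-AMPS} and carry out that reduction explicitly rather than treat it as bookkeeping.
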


 If for some $\varepsilon >0 $ the
  $(d+1)$-tuple $(A_1,\dots,A_d,A_{d+1}-i\varepsilon \Id)$ is
  Siegel-dissipative,  we say that $(A_1,\ldots,A_{d+1})$
  is \emph{strongly} Siegel--dissipative.   In this case, 
for $v,v'\in\Dom(A_{d+1})$ consider the inner product
$$
\langle v,v'\rangle_{\Delta}:=\frac{1}{2i}\big(\langle
A_{d+1}v,v'\rangle_{\cH}-\langle
v,A_{d+1}v'\rangle_{\cH}\big)-\frac{1}{4}\sum_{i=1}^{d}\langle
v,v'\rangle_{\cH} 
$$
so that 
$$
\norm v \norm^2 _{\Delta}  =\Im\inner{A_{d+1}v}{v}_{\cH}-\frac14 \sum_{i=1}^d \norm A_i v \norm ^2_{\cH} \geq \varepsilon \norm v \norm ^2_{\cH}, \,\,\, \forall v
\in \Dom(A_{d+1}),
$$  
which is a stronger condition than
\eqref{Siegel-Inequality}. If $\cH_\Delta=(\Dom{A_{d+1}},\|\cdot\|_{\Delta})$ we define 
\begin{align*}
\begin{split}
 \cL^2&(\Delta):=L^2(\bbN_0^d\times\bbR_{-},d\mu;\cH_\Delta)\\
 &=\Big\{ g :\bbN_0^d\times\bbR_{-} \to \Dom A_{d+1}: \sum_{\alpha
   \in \bbN_0^d}\int_{-\infty}^0 \norm g (\alpha,\lambda)\norm^2_\Delta \ d\mu(\alpha,\lambda)<+\infty\Big\}.
 \end{split}
\end{align*}

With our assumptions it follows that $\cL^2(\Delta)$ is a pre-Hilbert  space.
Before stating our result on $d+1$-tuples of operators we also need to study some weighted shift operators on $L^2(\bbN_0^d\times\bbR_-,\, d\mu)$, which correspond to some multiplier operators on $DA$ (see Section \ref{multipliers-section}). If $\alpha,\gamma\in \bbN_0^d$ we write $\alpha\geq\gamma$ meaning that $\alpha_i\geq \gamma_i$ for all $i=1,\ldots,d$. Then, we prove the following. 
 \begin{theorem}\label{shift-adjoint}
 Let $\gamma \in \bbN_0^d$ and $\tau<0 $. Then, the operator 
 \[
     (S_{\gamma,\tau} \varphi)(\alpha,\lambda) = \begin{cases}  \frac{|\lambda-\tau|^d}{|\lambda|^d}\varphi(\alpha - \gamma, \lambda - \tau) & \text{if} \quad \lambda<\tau \wedge \alpha\geq\gamma;
     \\
     \phantom{phantom}\\
     0 & \text{otherwise,}
     \end{cases}
 \] extends to a bounded linear operator on $L^2(\bbN_0^d\times \bbR_-,
 d\mu )$ unitarily equivalent to the multiplier operator on $DA$
 with multiplier $\z^\gamma e^{-i\tau \z_{d+1}} $. Furthermore, its adjoint is given by the formula 
 \[ (S_{\gamma,\tau}^*\varphi)(\alpha, \lambda) = \frac{|\lambda+\tau|^{d-|\alpha|-|\gamma|}}{|\lambda|^{d-|\alpha|}} \frac{(\alpha+\gamma)!}{\alpha ! } 2^{|\gamma|} \varphi(\alpha+\gamma, \lambda + \tau ).\]
 \end{theorem}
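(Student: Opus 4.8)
The plan is to verify the claimed formula for $S_{\gamma,\tau}$ by transporting the multiplier operator $M_{\gamma,\tau}$ on $DA$ with symbol $\z^\gamma e^{-i\tau\z_{d+1}}$ back to $L^2(\bbN_0^d\times\bbR_-,d\mu)$ via the surjective conjugate-linear isometry $\sS$ of Theorem \ref{isometry-Phi}, and then compute the adjoint directly in the sequence model. First I would compute $\sS^{-1} M_{\gamma,\tau}\sS$. Applying $\sS$ to $\varphi$ produces an integral superposition of the monomials $\z^\beta e^{-i\lambda\z_{d+1}}$; multiplying by $\z^\gamma e^{-i\tau\z_{d+1}}$ shifts $\beta\mapsto\beta+\gamma$ and $\lambda\mapsto\lambda+\tau$ (recall $\tau<0$, so $\lambda+\tau$ stays in $\bbR_-$). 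Re-expressing the result in the form $\sS\psi$ and matching the weights $|\lambda|^d\,d\alpha\,d\lambda$ against $\frac 2{|\lambda|}^{|\alpha|}\alpha!|\lambda|^{2d}$ from $d\mu$ will force a change of variables in $\lambda$; carrying the Jacobian and the ratio of $\mu$-densities through gives precisely the factor $|\lambda-\tau|^d/|\lambda|^d$ together with the support restriction $\lambda<\tau$, $\alpha\ge\gamma$. Since $\sS$ is a conjugate-linear isometry, $S_{\gamma,\tau}=\sS^{-1}M_{\gamma,\tau}\sS$ is bounded and unitarily equivalent (in the conjugate-linear sense, which still transports operator norms and adjoints correctly) to $M_{\gamma,\tau}$; boundedness also follows independently because $\z^\gamma e^{-i\tau\z_{d+1}}\in\cM(DA)$, a fact one can cite from Theorem \ref{thm-vonNeumann} or establish by the same Paley--Wiener computation.

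Once the explicit formula for $S_{\gamma,\tau}$ is in hand, the adjoint is obtained by a bare-hands computation in the weighted $\ell^2\otimes L^2$ space: for $\varphi,\psi\in L^2(d\mu)$ I would write out
\begin{equation*}
\langle S_{\gamma,\tau}\varphi,\psi\rangle_{L^2(d\mu)} = \sum_{\alpha\ge\gamma}\int_{-\infty}^{\tau}\frac{|\lambda-\tau|^d}{|\lambda|^d}\,\varphi(\alpha-\gamma,\lambda-\tau)\,\overline{\psi(\alpha,\lambda)}\,d\mu(\alpha,\lambda),
\end{equation*}
substitute $\beta=\alpha-\gamma$ and $\eta=\lambda-\tau$ so that the sum runs over all $\beta\in\bbN_0^d$ and $\eta\in\bbR_-$, and then compare the resulting density in $(\beta,\eta)$ with $d\mu(\beta,\eta)$. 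The bookkeeping ratio is
\begin{equation*}
\frac{|\lambda-\tau|^d}{|\lambda|^d}\cdot\frac{d\mu(\alpha,\lambda)}{d\mu(\beta,\eta)}\bigg|_{\beta=\alpha-\gamma,\ \eta=\lambda-\tau}
= \frac{|\eta|^d}{|\eta+\tau|^d}\cdot\frac{(\beta+\gamma)!}{\beta!}\left(\frac{2}{|\eta+\tau|}\right)^{|\gamma|}\frac{|\eta+\tau|^{2d}}{|\eta|^{2d}},
\end{equation*}
and collecting powers of $|\eta|$ and $|\eta+\tau|$ yields exactly $\dfrac{|\eta+\tau|^{d-|\beta|-|\gamma|}}{|\eta|^{d-|\beta|}}\dfrac{(\beta+\gamma)!}{\beta!}2^{|\gamma|}$, which is the stated formula for $(S_{\gamma,\tau}^*\psi)(\beta,\eta)$ after relabelling $(\beta,\eta)$ back to $(\alpha,\lambda)$ (note $|\lambda+\tau|=|\lambda-\tau|$ is irrelevant since one should track signs: with $\lambda,\tau<0$ one has $\lambda-\tau$ of either sign, so I will be careful to write $|\lambda-\tau|$ throughout and only at the end match the paper's $|\lambda+\tau|$ by the change of variable).

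The step I expect to be the main obstacle is the careful tracking of the measure-theoretic weights and the support conditions through the change of variables: the densities in $d\mu$ mix a factor $\alpha!$, an exponential-in-$|\alpha|$ factor $(2/|\lambda|)^{|\alpha|}$, and a polynomial factor $|\lambda|^{2d}$, and the $\lambda$-translation interacts nontrivially with the $(2/|\lambda|)^{|\alpha|}$ term, so the exponent of $|\lambda\pm\tau|$ in the final answer is a delicate cancellation of a $d$ coming from the $|\lambda|^d$ inside $\sS$, a $2d$ from the $\mu$-density, and an $|\alpha|$ from the geometric factor. A secondary subtlety is purely formal: because $\sS$ is \emph{conjugate} linear, one must check that $\langle \sS^{-1}X\sS\varphi,\psi\rangle = \overline{\langle X\sS\varphi,\sS\psi\rangle}$ unwinds so that $(\sS^{-1}X\sS)^* = \sS^{-1}X^*\sS$ still holds; since $M_{\gamma,\tau}$ is the multiplier operator and its adjoint on $DA$ is computed against the reproducing kernel, one could alternatively derive $S_{\gamma,\tau}^*$ from the known action of $M_{\gamma,\tau}^*$ on kernel functions, but the direct computation above is cleaner and self-contained, so that is the route I would take.
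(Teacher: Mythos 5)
Your proposal is correct and follows essentially the same route as the paper: the identification of $S_{\gamma,\tau}$ with $\sS^{-1}M_{\gamma,\tau}\sS$ via the Paley--Wiener representation is exactly the computation of Lemma \ref{boundedmult}, and the adjoint is obtained by the same change of variables $\beta=\alpha-\gamma$, $\eta=\lambda-\tau$ in the $L^2(d\mu)$ pairing. (One small slip: your displayed bookkeeping ratio omits the factor $\bigl(|\eta|/|\eta+\tau|\bigr)^{|\beta|}$ coming from the geometric weight $(2/|\lambda|)^{|\alpha|}$, but your prose correctly accounts for this contribution and the final exponents $d-|\alpha|-|\gamma|$ and $d-|\alpha|$ are right.)
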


We remark that, if we call the multipliers
$m_j$, $j=1,\dots,d$, that appear in Theorem \ref{thm-vonNeumann} the
{\em shift operators} on $DA$, then the operators $S_{\gamma,\tau}$
correspond to the operators $\sS^{-1} m_j \sS$. 
Finally, we have the following result.
\begin{theorem}\label{lifting-thm}
Let $(A_1,\ldots,A_{d+1})$ be a strongly Siegel--dissipative vector of
commuting operators and set $A^{\alpha}= A_1^{\alpha_1}\cdots A^{\alpha_d}_d$ for any multi-index $\alpha\in\bbN_0^d$. Then:
\begin{enumerate}
    \item[(i)] the map $\Theta:\cH\to L^2(\NN_0^d\times\bbR_{-},d\mu ; \cH_\Delta)$ defined as
\begin{equation*}
(\Theta v)(\alpha,\lambda)=\frac{|\lambda|^{|\alpha|-d}}{\alpha! 2^{|\alpha|-\frac{1}{2}}}e^{-i\lambda A_{d+1}}A^\alpha v
\end{equation*}
is an isometric embedding, i.e.,  \begin{equation*}
  \|\Theta v\|_{\cL^2(\Delta)}=\|v\|_{\cH}, \,\,\, \forall v \in \cH .
 \end{equation*}
\item[(ii)] the diagram 
\begin{equation}\label{diagram-Siegel}
\begin{tikzcd}
 L^2(\NN_0^d\times\bbR_{-},d\mu ; \cH_\Delta) \arrow{r}{S_{\gamma,\tau}^* \otimes \Id }  &  L^2(\NN_0^d\times\bbR_{-},d\mu ; \cH_\Delta) \\
\arrow{u}{\Theta} \cH \arrow{r}{ e^{-i\tau A_{d+1}}A^\gamma } & \cH \arrow{u}{\Theta}
\end{tikzcd}
\end{equation}
commutes.
\end{enumerate}
\end{theorem}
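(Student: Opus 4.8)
The plan is to prove (ii) by a direct identification of the two composites, and (i) by turning the computation of $\|\Theta v\|_{\cL^2(\Delta)}^2$ into a first–order ODE for a generating function in the Fourier variable $\lambda$. Throughout I would work with $v$ in the dense subspace $\Dom(A_{d+1})$ (this suffices for the application to Theorem~\ref{thm-vonNeumann}, and $\Theta$ then extends by continuity); note that then $A^\alpha v\in\Dom(A_{d+1})$ for every $\alpha\in\bbN_0^d$ since the bounded operators $A_1,\dots,A_d$ commute with $A_{d+1}$, and that $\{e^{-i\lambda A_{d+1}}\}_{\lambda\le0}$ preserves $\Dom(A_{d+1})$, is a semigroup, commutes with each $A_j$ (Lemma~\ref{commutation}), and satisfies $\|e^{-i\lambda A_{d+1}}\|\le e^{\varepsilon\lambda}$ for $\lambda\le0$ by strong Siegel-dissipativity and the Lumer--Philips theorem (Theorem~\ref{Lumer-Philips}).

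For part (ii) I would simply compute both composites. Inserting the formula for $S_{\gamma,\tau}^*$ from Theorem~\ref{shift-adjoint} and the definition of $\Theta$, in the left-hand composite the factor $(\alpha+\gamma)!$ and all powers of $|\lambda+\tau|$ cancel, leaving
\[
\big((S_{\gamma,\tau}^*\otimes\Id)\,\Theta v\big)(\alpha,\lambda)=\frac{|\lambda|^{|\alpha|-d}}{\alpha!\,2^{|\alpha|-1/2}}\,e^{-i(\lambda+\tau)A_{d+1}}A^{\alpha+\gamma}v .
\]
For the right-hand composite $\big(\Theta(e^{-i\tau A_{d+1}}A^\gamma v)\big)(\alpha,\lambda)=\frac{|\lambda|^{|\alpha|-d}}{\alpha!\,2^{|\alpha|-1/2}}e^{-i\lambda A_{d+1}}A^\alpha e^{-i\tau A_{d+1}}A^\gamma v$, and using that the $A_j$ commute with the semigroup and the semigroup law $e^{-i\lambda A_{d+1}}e^{-i\tau A_{d+1}}=e^{-i(\lambda+\tau)A_{d+1}}$ one obtains exactly the same expression. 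Since $S_{\gamma,\tau}^*\otimes\Id$ is bounded on $\cL^2(\Delta)$ (by the boundedness of $S_{\gamma,\tau}$ on $L^2(\bbN_0^d\times\bbR_-,d\mu)$ from Theorem~\ref{shift-adjoint}, tensoring with the identity on $\cH_\Delta$ leaving the norm unchanged), both sides are genuine elements of $\cL^2(\Delta)$ and the diagram commutes.

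For part (i), after inserting the definition of $\Theta$ and simplifying $d\mu$ I expect
\[
\|\Theta v\|_{\cL^2(\Delta)}^2=\sum_{\alpha\in\bbN_0^d}\frac{2}{\alpha!\,2^{|\alpha|}}\int_{-\infty}^0|\lambda|^{|\alpha|}\,\big\|e^{-i\lambda A_{d+1}}A^\alpha v\big\|_\Delta^2\,d\lambda .
\]
Write $g_\alpha(\lambda)=\|e^{-i\lambda A_{d+1}}A^\alpha v\|_{\cH}^2$ and $\psi_\alpha(\lambda)=\|e^{-i\lambda A_{d+1}}A^\alpha v\|_\Delta^2$. Differentiating the semigroup gives $g_\alpha'(\lambda)=2\,\Im\langle A_{d+1}e^{-i\lambda A_{d+1}}A^\alpha v,\,e^{-i\lambda A_{d+1}}A^\alpha v\rangle_{\cH}$, and since $A_i e^{-i\lambda A_{d+1}}A^\alpha v=e^{-i\lambda A_{d+1}}A^{\alpha+e_i}v$ (with $e_i$ the $i$-th coordinate vector of $\bbN_0^d$), the definition of $\|\cdot\|_\Delta$ yields
\[
\psi_\alpha(\lambda)=\tfrac12\,g_\alpha'(\lambda)-\tfrac14\sum_{i=1}^d g_{\alpha+e_i}(\lambda),\qquad \lambda<0 .
\]
The key step is then to introduce the generating functions $\Phi(\lambda)=\sum_{\alpha}\frac{|\lambda|^{|\alpha|}}{\alpha!\,2^{|\alpha|}}g_\alpha(\lambda)$ and $\Psi(\lambda)=\sum_{\alpha}\frac{|\lambda|^{|\alpha|}}{\alpha!\,2^{|\alpha|}}\psi_\alpha(\lambda)$ --- both converge for each $\lambda<0$, locally uniformly, by the crude bound $\|e^{-i\lambda A_{d+1}}A^\alpha v\|\le e^{\varepsilon\lambda}C^{|\alpha|}\|v\|$ for a suitable $C\ge0$, and all manipulations below are legitimate because every summand is nonnegative. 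Differentiating $\Phi$ termwise, using $\tfrac{d}{d\lambda}|\lambda|^{|\alpha|}=-|\alpha|\,|\lambda|^{|\alpha|-1}$ and the identity above, and reindexing the sum produced by $\sum_i g_{\alpha+e_i}$ by $\beta=\alpha+e_i$ (so that $\tfrac{1}{\alpha!\,2^{|\alpha|}}=\tfrac{2\beta_i}{\beta!\,2^{|\beta|}}$), the two contributions carrying $|\lambda|^{|\alpha|-1}g_\alpha$ cancel \emph{exactly} and one is left with the clean identity $\Phi'(\lambda)=2\,\Psi(\lambda)$ for $\lambda<0$. Hence, by Tonelli, $\|\Theta v\|_{\cL^2(\Delta)}^2=2\int_{-\infty}^0\Psi=\int_{-\infty}^0\Phi'=\Phi(0^-)-\lim_{\lambda\to-\infty}\Phi(\lambda)$; a dominated-convergence argument gives $\Phi(0^-)=g_0(0)=\|v\|_{\cH}^2$, and it remains to see that the boundary term at $-\infty$ vanishes. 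This is where strong Siegel-dissipativity is decisive: $\psi_\alpha(\lambda)\ge\varepsilon\,g_\alpha(\lambda)$ gives $\Psi\ge\varepsilon\Phi$, hence $\Phi'\ge2\varepsilon\Phi$, so $h(\lambda):=e^{-2\varepsilon\lambda}\Phi(\lambda)$ is nondecreasing and $\Phi(\lambda)\le e^{2\varepsilon\lambda}\Phi(0^-)=\|v\|_{\cH}^2\,e^{2\varepsilon\lambda}\to0$ as $\lambda\to-\infty$. Therefore $\|\Theta v\|_{\cL^2(\Delta)}=\|v\|_{\cH}$ on $\Dom(A_{d+1})$, and $\Theta$ extends to an isometric embedding.

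The main obstacle is part (i): recognizing and verifying the exact cancellation that collapses everything to $\Phi'=2\Psi$ (this is the Siegel-upper-half-space counterpart of the algebraic identity at the core of Drury's original argument), and then disposing of the boundary term at $\lambda\to-\infty$, which is precisely the place where the spectral gap $\varepsilon$ of the \emph{strong} Siegel-dissipativity hypothesis is used, via the differential inequality $\Phi'\ge 2\varepsilon\Phi$ --- without that gap one only obtains the contractive bound $\|\Theta v\|\le\|v\|$. A secondary technical point, handled by nonnegativity of the summands together with the exponential decay $\|e^{-i\lambda A_{d+1}}\|\le e^{\varepsilon\lambda}$, is the rigorous justification of termwise differentiation and of interchanging the summation with the $\lambda$-integration. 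Part (ii), by contrast, is routine bookkeeping.
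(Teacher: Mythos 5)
Your proposal is correct and follows essentially the same route as the paper: part (ii) is the identical intertwining computation, and in part (i) your identity $\Phi'=2\Psi$ is exactly the paper's cancellation (termwise integration by parts in $\lambda$ combined with the reindexing $\beta=\alpha+e_i$, $1/\alpha!=\beta_i/\beta!$), merely repackaged as a generating-function ODE, with the same appeal to the $\varepsilon$-gap via Lumer--Phillips to dispose of the contribution at $\lambda\to-\infty$. The only difference worth noting is that you work on the dense subspace $\Dom(A_{d+1})$ and extend by continuity, whereas the paper defines $\Theta$ directly on all of $\cH$; your variant is, if anything, slightly more careful about why $(\Theta v)(\alpha,\lambda)$ lands in $\cH_\Delta=\Dom(A_{d+1})$.
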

We conclude this introduction pointing out that the Drury--Arveson
space $DA(\bbB^{d+1})$ on the unit ball of $\bbC^{d+1}$ has drawn considerable
interest since its first appearance in the works of Drury and of Arveson
\cite{Drury}, \cite{Arveson}.   We mention \cite{ARS_carleson,ARS,AL},
\cite{fang_xia}, \cite{Richter-Sunkes} and references therein.  The
Drury--Arveson space on $\cU$ was studied
in \cite{ARS} and\cite{AMPS}; see also \cite{CP} for the case of more general Siegel type domains.

The paper is organized as follows. In Section \ref{resti}
 we recall some preliminary facts,  in Section \ref{DA-section} we
 study the space $DA$ and we prove Theorem \ref{isometry-Phi}. In Section \ref{multipliers-section} we study some multiplier operators on $DA$ and we prove Theorem \ref{shift-adjoint}. We conclude proving  Theorem \ref{lifting-thm} and Theorem \ref{thm-vonNeumann} in Section \ref{lifting-section}.

\section{Preliminary facts}\label{resti}

In this section we first recall some basic facts on the Siegel
half-space, the Heisenberg group and the group Fourier
transform. Then we also recall some simple properties of semigroups of
operators on a Hilbert space.

\subsection{The Siegel upper half-space and the Heisenberg group}

The Siegel upper half-space is defined in 
\eqref{Siegel-domain} and it is biholomorphically equivalent to the unit ball
$\bbB^{d+1}$ via the Cayley transfom \eqref{cayley_transform}.
Standard references for the facts that follow are e.g. \cite{Folland-phase,Stein,
  Thangavelu}.
    We
    introduce new coordinates on $\cU$ setting $\Psi(\z,\z_{d+1})=(z,t,h)$ where
    \begin{equation*}
    \begin{cases}
    z=\z\cr
    t= \Re \z_{d+1}\cr
    h =\Im \z_{d+1} -\frac14|\z|^2 \,.
    \end{cases}
    \end{equation*}
    Then,  if $\bU =\bbC^d\times\bbR\times(0,+\infty)$, 
    $\Psi: \ov\cU\to
    \ov\bU$ is a $C^\infty$-diffeomorphism,  
    and $\Psi^{-1}$
    is given by
    \begin{equation*}    \Psi^{-1} (z,t,h) = \bigl(z,t+i\textstyle{\frac14}|z|^2+ih\bigr) =:  (\z,\z_{d+1})
    \,. 
\end{equation*}
Notice that
$h=\varrho(\z,\z_{d+1})$ and $\Psi(\z, t+\frac i4|\z|^2)=(z,t,0)$. In
this case we write $[z,t]$ in place of $(z,t,0)$ and we also use the abuse of notation $[z,t]\in \p \cU$. We now let the
points in $\p\cU$ act on $\ov\cU$ as biholomorphic maps in the
following way. For $[z,t]\in\p\cU$ set 
\begin{equation}\label{Phi}
\Phi_{[z,t]} (\omega,\omega_{d+1})
= \bigl(\omega+z, \omega_{d+1} +t
+i\textstyle{\frac14}|z|^2+ \frac i2 \omega\cdot\bar z \bigr) \,,
\end{equation}
where $\omega\cdot \bar z=\sum_{j=1}^d\omega_j\bar z_j$ denotes the inner product in $\bbC^d$. 
Notice that
\begin{align*}
\varrho\Bigl(
\Phi_{[z,t]}
(\omega,\omega_{d+1})\Bigr) 
& = \varrho(\omega,\omega_{d+1})\,,  
\end{align*}
that is, the maps $\Phi_{[z,t]}$ preserve the
defining function $\varrho$. 
 In particular, for $(\omega,\omega_{d+1})\in\p\cU$ and
$[w,s]=\Psi(\omega,\omega_{d+1})$, by \eqref{Phi} we have
\begin{align*}
\Phi_{[z,t]} \big(
(\omega,\omega_{d+1})\big) & = \Phi_{[z,t]} \Big( \Psi^{-1}
(w,s,0)\Big) = 
\Phi_{[z,t]} \bigl(w,s+i\textstyle{\frac14}|w|^2\bigr) \notag \\
 & =\bigl(w+z,s+\textstyle{\frac i4}|w|^2+t+ \textstyle{\frac i4}|z|^2
+\textstyle{\frac i2} w\cdot\bar
z\bigr) \notag \\
& =\bigl[ w+z,s+t -\half\Im(w \cdot\bar
z)\bigr] \notag \\ 
& =: [w,s][z,t] \, .  
\end{align*}
Hence, it is possible to introduce a group structure on $\p\cU$ itself.
\begin{definition}{\rm
The Heisenberg group $\bbH_d$  is the set $\bbC^d\times\bbR$ endowed with product 
$$
[w,s][z,t]  =  \big[ w+z, s+t -\textstyle{\frac12} \Im (w\cdot\bar z)\big] \,.
$$}
\end{definition}
The right and left Haar
measures on the Heisenberg group coincide with the Lebesgue measure on $\bbC^d\times\bbR$. In particular, the Lebesgue measure is both left and right translation invariant. 

\medskip

We now recall the basic facts for the Fourier transform on the Heisenberg
group. For $\lambda\in\bbR\setminus\{0\}$ define the Fock space
\begin{equation*}
\cF^\lambda 
=\bigg\{ F \in\Hol(\bbC^d):\ 
\bigg( \frac{|\lambda|}{2\pi} \bigg)^d \int_{\bbC^d} |F(z)|^2\,
e^{-\frac\lambda2 |z|^2}dz <+\infty \bigg\}
\end{equation*}
when $\lambda>0$,   and $\cF^\lambda = \cF^{|\lambda|}$ when  
$\lambda<0$. The Fock space is a reproducing kernel Hilbert space with reproducing kernel $e^{\frac{|\lambda|}{2}z\cdot \ov w}$. A complete orthonormal basis of $\cF^\lambda$ is given by the normalized monomials $\{z^\alpha/\|z^\alpha\|_{\cF^\lambda}\}_{\alpha\in\bbN_0^d}$, where
$$
\|z^\alpha\|_{\cF^\lambda}^2 
= \alpha!
\bigg(\frac{2}{|\lambda|}\bigg)^{|\alpha|}
\,.
$$

For $[z,t]\in\bbH_d$, the
Bargmann 
representation $\sigma_\lambda[z,t]$ is the operator acting on
$\cF^\lambda$ given by, 
\begin{equation*}
\sigma_\lambda[z,t] F (w)
= e^{i\lambda t-\frac\lambda2 w\cdot\ov z -\frac\lambda4 |z|^2}
F(w+z) 
\end{equation*}
if $\lambda>0$, and, if $\lambda<0$, as 
$\sigma_\lambda[z,t] = \sigma_{-\lambda} [\ov z,-t]$, that is, 
\begin{equation*}
\sigma_\lambda[z,t] F (w)
= e^{i\lambda t+\frac\lambda2 w\cdot z +\frac\lambda4 |z|^2}
F(w+\bar z) \,.
\end{equation*}

If $f\in L^1(\bbH_d)$, for $\lambda\in\bbR\setminus\{0\}$, 
$\sigma_\lambda(f)$ is the operator 
acting on $\cF^\lambda$ as
$$
 \sigma_\lambda (f) F (w) 
= \int_{\bbH_d} f[z,t] \sigma_\lambda[z,t]F (w)\, dzdt\,. 
$$
If $f\in L^2(\bbH_d)$, we have Plancherel's formula
\begin{equation*}
\| f\|_{L^2(\bbH_d)}^2 = \frac{1}{(2\pi)^{d+1} }
\int_{\bbR}  \|\sigma_\lambda(f)\|_{\HS}^2 |\lambda|^d\,
d\lambda\,,
\end{equation*}
where $\|\sigma_\lambda(f)\|^2_{\HS}=\sum_{\alpha}\|\sigma_{\lambda}(f)e_\alpha\|^2_{\cF^\lambda}$ is the Hilbert--Schmidt norm of $\sigma_\lambda(f)$ and $e_\alpha=z^\alpha/\|z^\alpha\|_{\cF^\lambda}$. 
If $f\in L^1\cap L^2(\bbH_d)$ the following inversion formula holds:
\begin{equation*}
f[z,t] = \frac{1}{(2\pi)^{d+1}}  
\int_{\bbR}  \tr \big( \sigma_\lambda(f)\sigma_\lambda[z,t]^*\big)
|\lambda|^d\, d\lambda .
\end{equation*}

\subsection{The Drury--Arveson space on $\mathcal{U}$} In \cite{AMPS} a family of
holomorphic function spaces on $\cU$ depending on a real parameter
$\nu$ was studied and characterized by means of the group Fourier
transform on $\bbH_d$. This family of spaces includes weighted Bergman
spaces, the Hardy space, weighted Dirichlet spaces and the Dirichlet
space. In particular, the Drury--Arveson space $DA$ was
identified as a particular weighted Dirichlet space. Here we recall the results in \cite{AMPS} we need in the rest of the paper.

\begin{definition}\label{L2-DA}{\rm
We define  the space 
$\cL^2_{DA} $ as the space of functions $\tau$
 on $\bbR\setminus\{0\}$ such that:
\begin{align*}
 \begin{split}
\rm{(i)}&\textrm{ $\tau(\lambda) \in \HS(\cF^\lambda)$  for
  every $\lambda$, i.e., $\tau(\lambda):\cF^\lambda\to\cF^\lambda$ is a Hilbert--Schmidt operator;}\\
\rm{(ii)}&\textrm{ $\tau(\lambda)=0$ for
  $\lambda>0$;}\\
 \rm{(iii)}& \textrm{ $\ran( \tau(\lambda)) \subseteq
   \operatorname{span}\{1\}$;}\\
\rm{(iv)}& \textrm{ $
\displaystyle{ \|\tau\|_{\cL^2_{DA}}^2: = \frac{1}{(2\pi)^{d+1}} 
\int_{-\infty}^0 \|\tau(\lambda)\|_{\HS}^2 \,
|\lambda|^{2d} d\lambda <+\infty }$.} 
 \end{split}
\end{align*}}
\end{definition}

The following result holds true. 
\begin{theorem}[\cite{AMPS}]\label{DA-AMPS}
Let $n>\frac{d}{2}$. 
Let $f\in DA_{(n)}$  defined as in \eqref{DA-m-def}.  Then, there exists $\tau\in \cL^2_{DA} $ such
that, for $(\z,\z_{d+1})\in\cU$,
\begin{equation}\label{PW-Dnu-eq1}
f(\z,\z_{d+1}) = (f\circ\Psi^{-1})(z,t,h) =
\frac{1}{(2\pi)^{d+1}} \int_{-\infty}^0 
 e^{h\lambda} \tr \big(
\tau(\lambda)\sigma_\lambda[z,t]^*\big) \, |\lambda|^d d\lambda\,,
\end{equation}
and
\begin{equation}\label{PW-Dnu-eq2}
\| f\|_{DA_{(n)}}^2
= \frac{\Gamma(2n-d)}{2^{2n-d}} 
\| \tau\|_{\cL^2_{DA}}^2 \,. 
\end{equation}

Conversely, given $\tau\in\cL^2_{DA}$, let $f$ be defined as in
\eqref{PW-Dnu-eq1}. Then $f\in DA_{(n)}$ and \eqref{PW-Dnu-eq2}
holds. 
Therefore, for each $n>\frac{d}{2}$, the spaces $DA_{(n)}$
all coincide and their norms satisfy \eqref{PW-Dnu-eq2}.  
\end{theorem}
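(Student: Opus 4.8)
The plan is to pass, through the $\Psi$-coordinates $(z,t,h)$ and the group Fourier transform on $\bbH_d$, from a holomorphic $f$ on $\cU$ to an operator-valued function $\lambda\mapsto\tau(\lambda)$ on $\bbR\setminus\{0\}$: the three structural conditions of Definition \ref{L2-DA} will come from holomorphy together with the decay condition (i) of \eqref{DA-m-def}, and the identity \eqref{PW-Dnu-eq2} — as well as the $n$-independence — will drop out of an explicit Plancherel-plus-$\Gamma$-integral computation. For $h>0$ put $F_h:=(f\circ\Psi^{-1})(\cdot,\cdot,h)$, a function on $\bbH_d$, and, once it is known that $F_h$ has a well-defined group Fourier transform (see the final paragraph for this point), set $\tau_h(\lambda):=\sigma_\lambda(F_h)\in\HS(\cF^\lambda)$. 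In these coordinates $\partial_{\bar\zeta_{d+1}}=\tfrac12\partial_t+\tfrac i2\partial_h$, so $\bar\partial f=0$ forces $\partial_t F_h=-i\partial_h F_h$; applying $\sigma_\lambda$ and the intertwining relation $\sigma_\lambda(\partial_t g)=-i\lambda\,\sigma_\lambda(g)$ (immediate from the factor $e^{i\lambda t}$ in $\sigma_\lambda[z,t]$ and an integration by parts) yields the operator ODE $\partial_h\tau_h(\lambda)=\lambda\,\tau_h(\lambda)$, hence $\tau_h(\lambda)=e^{\lambda h}\tau(\lambda)$ for a fixed $\tau(\lambda)\in\HS(\cF^\lambda)$. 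Since by condition (i) — and by the integrability required to run the argument — $F_h$ may not blow up as $h\to+\infty$, we must have $\tau(\lambda)=0$ for $\lambda\ge0$; this gives conditions (i)--(ii) of Definition \ref{L2-DA}, and the factor $e^{h\lambda}$ in \eqref{PW-Dnu-eq1} is exactly this $e^{\lambda h}$.

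For the remaining variables one has $\partial_{\bar\zeta_j}=\partial_{\bar z_j}-\tfrac14 z_j\partial_h$, so $\bar\partial f=0$ together with the previous step gives, at each Fourier level, $\partial_{\bar z_j}F_h=\tfrac14 z_j\,\partial_h F_h$. Translating this through $\sigma_\lambda$, using the explicit way the Bargmann representation realizes the horizontal operators $\partial_{\bar z_j}$ and multiplication by $z_j$ as annihilation and creation operators on $\cF^\lambda$, one finds that $\tau(\lambda)$ must kill the orthogonal complement of the vacuum $1\in\cF^\lambda$ and have range in the vacuum line, i.e.\ $\ran\tau(\lambda)\subseteq\Span\{1\}$, which is condition (iii). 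With $\tau_h(\lambda)=e^{\lambda h}\tau(\lambda)$ now completely described, the inversion formula on $\bbH_d$ rewrites $f$ as in \eqref{PW-Dnu-eq1}.

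It remains to compute norms. In the $\Psi$-coordinates $\rho=h$ and $d\zeta\,d\zeta_{d+1}=dz\,dt\,dh$, and by holomorphy $\partial_{\zeta_{d+1}}^n$ coincides with $(-i\partial_h)^n$ on $f$, so $\sigma_\lambda\big((\partial_{\zeta_{d+1}}^n f)(\cdot,\cdot,h)\big)=(-i\lambda)^n e^{\lambda h}\tau(\lambda)$. By Plancherel on $\bbH_d$ for each fixed $h$ and then Fubini,
\begin{align*}
\int_\cU|\rho^n\partial_{\zeta_{d+1}}^n f|^2\rho^{-d-1}\,d\zeta\,d\zeta_{d+1}
&=\frac{1}{(2\pi)^{d+1}}\int_0^{+\infty}h^{2n-d-1}\int_{-\infty}^0|\lambda|^{2n+d}e^{2\lambda h}\|\tau(\lambda)\|_{\HS}^2\,d\lambda\,dh\\
&=\frac{1}{(2\pi)^{d+1}}\int_{-\infty}^0|\lambda|^{2n+d}\|\tau(\lambda)\|_{\HS}^2\Big(\int_0^{+\infty}h^{2n-d-1}e^{-2|\lambda|h}\,dh\Big)d\lambda ,
\end{align*}
and the inner integral equals $\Gamma(2n-d)/(2|\lambda|)^{2n-d}$, finite precisely because $n>d/2$; collecting $|\lambda|^{2n+d}|\lambda|^{-(2n-d)}=|\lambda|^{2d}$ gives $\|f\|_{DA_{(n)}}^2=\tfrac{\Gamma(2n-d)}{2^{2n-d}}\|\tau\|^2_{\cL^2_{DA}}$, which is \eqref{PW-Dnu-eq2}, and in particular $\tau\in\cL^2_{DA}$. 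Conversely, given $\tau\in\cL^2_{DA}$ one defines $f$ by \eqref{PW-Dnu-eq1}: Cauchy--Schwarz in $\lambda$ against the weight $|\lambda|^{2d}d\lambda$, together with the pointwise bounds on $\tr(\tau(\lambda)\sigma_\lambda[z,t]^*)$ that follow from $\ran\tau(\lambda)\subseteq\Span\{1\}$ and the explicit form of $\sigma_\lambda[z,t]^*$, shows that the integral converges locally uniformly and, by Morera and Weierstrass, defines a holomorphic $f$; the factor $e^{h\lambda}$ with $\lambda<0$ gives condition (i) by dominated convergence; and the displayed computation run backwards gives condition (ii) with the stated norm, so $f\in DA_{(n)}$. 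Finally, $n$ entered only as the order of the derivative and through the constraint $n>d/2$ making the $\Gamma$-integral converge, while the correspondence $f\leftrightarrow\tau$ itself does not depend on $n$; hence all the spaces $DA_{(n)}$ with $n>d/2$ consist of the same functions and their norms satisfy \eqref{PW-Dnu-eq2}.

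Two points deserve care. First, a $DA$ function need not a priori possess $L^2$ (or even tempered) boundary values, so one must justify that a suitable object attached to $f$ has a well-defined group Fourier transform for which the manipulations above are legitimate; the cleanest device is to apply the argument not to $f$ but to the holomorphic function $\partial_{\zeta_{d+1}}^n f$, which belongs to the weighted Bergman space $A^2(\cU,\rho^{2n-d-1}dV)$ with $2n-d-1>-1$, and then to recover $\tau$ by dividing the resulting transform by $(-i\lambda)^n$ — this reduces the statement to the Paley--Wiener description of that weighted Bergman space. Second, and this is the genuine geometric input, the step on the horizontal variables — extracting the precise condition $\ran\tau(\lambda)\subseteq\Span\{1\}$ from the interplay between the Cauchy--Riemann operators of $\cU$ and the Bargmann representation — is where the structure of the Siegel domain and of the Heisenberg group is really used; the norm identity, by contrast, is the clean part, and it is this single computation that simultaneously produces the weight $|\lambda|^{2d}$, the constant $\Gamma(2n-d)/2^{2n-d}$, the hypothesis $n>d/2$, and the $n$-independence.
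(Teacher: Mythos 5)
The paper does not prove this statement: it is imported verbatim from \cite{AMPS} (note the citation in the theorem header), so there is no internal proof to compare against. Judged on its own terms, your reconstruction follows the standard Paley--Wiener strategy that \cite{AMPS} itself uses, and the quantitative heart of the argument is correct: the reduction $\tau_h(\lambda)=e^{\lambda h}\tau(\lambda)$ from $\partial_{\bar\zeta_{d+1}}f=0$, the vanishing for $\lambda\ge 0$, and the Plancherel-plus-$\Gamma$ computation producing the weight $|\lambda|^{2d}$, the constant $\Gamma(2n-d)/2^{2n-d}$, the condition $n>d/2$, and the $n$-independence all check out. Your closing remark correctly identifies the right technical device (apply the transform to $\partial_{\zeta_{d+1}}^n f$, which lies in the weighted Bergman space $A^2(\cU,\rho^{2n-d-1}dV)$ with $2n-d-1>-1$, rather than to $f$ itself).

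Two points keep this from being a complete proof. First, the derivation of condition (iii) of Definition \ref{L2-DA} from the horizontal Cauchy--Riemann equations $\partial_{\bar z_j}F_h=\tfrac14 z_j\,\partial_h F_h$ is only asserted, and your phrasing conflates two distinct conditions: ``$\tau(\lambda)$ kills the orthogonal complement of the vacuum'' means $\tau(\lambda)=\tau(\lambda)P_0$, whereas $\ran\tau(\lambda)\subseteq\Span\{1\}$ means $\tau(\lambda)=P_0\tau(\lambda)$. Only the latter is condition (iii), and it is the one the rest of the paper relies on (e.g.\ in Lemma \ref{isometry-lem}, where $\tau(\lambda)F=\langle F,\Phi_\lambda\rangle\,e_0$); you should actually carry out the annihilation/creation-operator computation in the Bargmann model to see which identity drops out. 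Second, the existence of $\sigma_\lambda(F_h)$ and the legitimacy of differentiating under $\sigma_\lambda$ are deferred to the final paragraph rather than built into the argument from the start; as written, the first two paragraphs manipulate an object that has not yet been shown to exist. Both are gaps of execution rather than of conception, and both are resolved in \cite{AMPS}, which is presumably why the present paper simply cites the result.
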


Hence, we simply write $DA$ in place of $DA_{(n)}$.

\subsection{Semigroups of operators}

A semigroup of operators on a Hilbert space $\cH$ is a one parameter
family of bounded linear operators $\{T_t\}_{t\geq 0}$ on $\cH$ such
that:
\begin{enumerate}
    \item[(i)]$T_0 = \Id $;
    \item[(ii)] $T_{t+s}=T_t T_s, \,\,\, \forall s ,t > 0. $
\end{enumerate}
If in addition $ T_t $ converges to the identity operator $\Id$ in the strong operator topology as $t \searrow 0$, the semigroup is called {\it strongly continuous} or $C_0$. From now on we will focus exclusively on $C_0$ semigroups.
 The infinitesimal generator $G$ of a $C_0$ semigroup $\{T_t\}_{t\geq
   0}$  is a linear operator defined on the subspace $\Dom(G)$ of
 $v\in \cH$ such that the limit  
$$
\lim_{t\searrow 0 } \frac{T_t v - v}{t}=: G v 
$$
exists in the norm topology. It can be shown that $G$ on its domain
$\Dom(G)$ is a linear densely defined closed operator \cite[Section
34, Theorem 4]{lax2002functional}. In particular, we will be interested
in the characterization of infinitesimal generators for contraction
semigroups, i.e., semigroups $\{T_t\}_{t\geq 0}$ such that each $T_t$
is a contraction, which is provided by the Lumer-Philips
Theorem.

\begin{definition}
A densely define operator $G$ on a Hilbert space $\cH$ is called {\it dissipative} if 
$$ \Re\, \inner{Gv}{v}_{\cH} \leq 0, \,\,\, \forall v \in \Dom(G). 
$$
It is called {\it maximal dissipative},  if it is dissipative and its resolvent set  $r(G)$ includes $\bbR_+=(0,+\infty)$.
\end{definition}

For the following renowned theorem we refer the reader, for instance, to \cite[p. 432]{lax2002functional}.

\begin{theorem}[Lumer-Philips]\label{Lumer-Philips} A densely defined operator $G$ is the
  infinitesimal generator of a (unique) semigroup of contractions if
  and only if it is maximal dissipative. 
\end{theorem}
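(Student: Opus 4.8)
\medskip
\noindent\textbf{Proof proposal.} The plan is to prove the two implications separately: the forward one via the Laplace transform of the semigroup, the backward one via the Yosida approximation. For the forward direction, suppose $G$ generates a contraction semigroup $\{T_t\}_{t\ge 0}$. Dissipativity I would obtain by differentiation at $0$: since $\Re\inner{T_tv}{v}_{\cH}\le\|T_tv\|_{\cH}\|v\|_{\cH}\le\|v\|_{\cH}^2$ for every $t\ge0$ and $v\in\Dom(G)$, the derivative $\Re\inner{Gv}{v}_{\cH}=\lim_{t\searrow0}t^{-1}\big(\Re\inner{T_tv}{v}_{\cH}-\|v\|^2_{\cH}\big)$ is $\le0$. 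For maximality I would show, for each $\lambda>0$, that $R_\lambda v:=\int_0^\infty e^{-\lambda t}T_tv\,dt$ defines a bounded operator with $\|R_\lambda\|\le 1/\lambda$, that $R_\lambda\cH\subseteq\Dom(G)$, and that $(\lambda\Id-G)R_\lambda=\Id$ on $\cH$ and $R_\lambda(\lambda\Id-G)=\Id$ on $\Dom(G)$; the routine computation uses $h^{-1}(T_h-\Id)R_\lambda v$, the semigroup law, and the closedness of $G$. Thus $(0,+\infty)\subseteq r(G)$.

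For the converse, assume $G$ is densely defined, dissipative, and $(0,+\infty)\subseteq r(G)$. Since $1\in r(G)$, the operator $\Id-G$ has an everywhere-defined bounded inverse, hence $G$ is closed. Dissipativity gives, for $\lambda>0$ and $v\in\Dom(G)$, the inequality $\|(\lambda\Id-G)v\|_{\cH}\|v\|_{\cH}\ge\Re\inner{(\lambda\Id-G)v}{v}_{\cH}\ge\lambda\|v\|_{\cH}^2$, so $\|(\lambda\Id-G)^{-1}\|\le 1/\lambda$. I would then introduce the Yosida approximants $G_\lambda:=\lambda^2(\lambda\Id-G)^{-1}-\lambda\Id=\lambda G(\lambda\Id-G)^{-1}$, each bounded, and verify in order: (a) $\lambda(\lambda\Id-G)^{-1}\to\Id$ strongly, because on the dense set $\Dom(G)$ one has $\lambda(\lambda\Id-G)^{-1}v-v=(\lambda\Id-G)^{-1}Gv\to0$ while $\|\lambda(\lambda\Id-G)^{-1}\|\le1$, whence $G_\lambda v=\lambda(\lambda\Id-G)^{-1}Gv\to Gv$ for $v\in\Dom(G)$; (b) each $e^{tG_\lambda}$ is a contraction, since $e^{tG_\lambda}=e^{-\lambda t}e^{t\lambda^2(\lambda\Id-G)^{-1}}$ and $\|\lambda^2(\lambda\Id-G)^{-1}\|\le\lambda$ give $\|e^{tG_\lambda}\|\le e^{-\lambda t}e^{\lambda t}=1$; (c) the $e^{tG_\lambda}$ converge strongly, uniformly for $t$ in compact sets, because the resolvents commute, so $e^{tG_\lambda}v-e^{tG_\mu}v=\int_0^te^{(t-s)G_\mu}e^{sG_\lambda}(G_\lambda-G_\mu)v\,ds$ yields $\|e^{tG_\lambda}v-e^{tG_\mu}v\|\le t\|(G_\lambda-G_\mu)v\|$, which tends to $0$ on $\Dom(G)$ by (a) and extends to all of $\cH$ by uniform boundedness. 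Setting $T_tv:=\lim_{\lambda\to\infty}e^{tG_\lambda}v$ then produces a $C_0$ semigroup of contractions, all the defining properties passing to the limit.

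It remains to identify the generator $\tilde G$ of $\{T_t\}$ with $G$ and to prove uniqueness. Passing to the limit in $e^{tG_\lambda}v-v=\int_0^te^{sG_\lambda}G_\lambda v\,ds$ (legitimate by (a) and (c)) gives $T_tv-v=\int_0^tT_sGv\,ds$ for $v\in\Dom(G)$, so dividing by $t$ and letting $t\searrow0$ shows $\tilde G\supseteq G$; since $\tilde G$ is maximal dissipative by the forward implication, $1\in r(\tilde G)$, and as $\Id-\tilde G$ extends the bijection $\Id-G\colon\Dom(G)\to\cH$ one concludes $\Dom(\tilde G)=\Dom(G)$, hence $\tilde G=G$. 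For uniqueness, if $\{S_t\}$ is another contraction semigroup with generator $G$, then for $v\in\Dom(G)$ the map $s\mapsto S_{t-s}T_sv$ has vanishing derivative, so $S_tv=T_tv$, and $S_t=T_t$ by density. I expect the main obstacle to be step (c) together with the verification that the generator of the limit semigroup is \emph{exactly} $G$ and not a proper extension --- the place where dissipativity reappears, through the bound $\|(\lambda\Id-G)^{-1}\|\le1/\lambda$, equivalently $1\in r(G)$. Under a weaker hypothesis (only $\lambda_0\Id-G$ surjective for a single $\lambda_0>0$) one would additionally need an open--closed connectedness argument in $(0,+\infty)$ to recover $(0,+\infty)\subseteq r(G)$; with the notion of maximal dissipativity used here this step is unnecessary.
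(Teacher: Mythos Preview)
Your proof is correct and follows the standard textbook route (Laplace transform for the forward implication, Yosida approximation for the converse). Note, however, that the paper does not supply its own proof of this theorem: it merely states it and refers the reader to \cite[p.~432]{lax2002functional}, so there is no argument in the paper to compare against --- your sketch is essentially the proof one finds in the cited reference.
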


The following is a very well-known lemma that we shall need in what follows.

\begin{lemma}\label{semigroup:lemma}
Let $\{T_t\}_{t\geq0}$ be a semigroup of bounded operators on a Hilbert space $\cH$. Suppose that:
\begin{enumerate}
\item[\rm (i)]  there exists $ \delta > 0 $ such that $\sup_{0<t<\delta} \norm T_t\norm _{\cB(\cH)} < +\infty$;
\item[\rm (ii)] for some dense subset $\cD \subseteq \cH $, $ T_t f \rightarrow f$ for all $f \in \cD$.  
\end{enumerate}
\noindent Then, $\{T_t\}_{t\geq0}$ is a strongly continuous semigroup. 
\end{lemma}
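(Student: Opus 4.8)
The plan is to reduce strong continuity to a statement about continuity at $t=0$ only, and then bootstrap from the dense subset $\cD$ to all of $\cH$ using the uniform bound in (i).

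First I would record the elementary reduction: for a semigroup, $\SOT$-continuity on all of $[0,\infty)$ follows once we know $T_t f \to f$ as $t\searrow 0$ for every $f\in\cH$. Indeed, for $t_0>0$ and $t>t_0$ we write $T_t f - T_{t_0} f = T_{t_0}(T_{t-t_0}f - f)$, and for $0<t<t_0$ we write $T_{t_0}f - T_t f = T_t(T_{t_0-t}f - f)$; in both cases one uses the local uniform boundedness from (i) (note that the semigroup law $T_{t+s}=T_tT_s$ propagates a bound on $(0,\delta)$ to a bound on every compact subinterval of $(0,\infty)$, since any $t$ can be written as a finite sum of increments each less than $\delta$) together with right-continuity at $0$ to conclude. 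So it suffices to prove $\lim_{t\searrow 0}\|T_t f - f\|_{\cH}=0$ for all $f\in\cH$.

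Next I would run the standard $\varepsilon/3$ density argument. Fix $f\in\cH$ and $\varepsilon>0$. Let $M:=\sup_{0<t<\delta}\|T_t\|_{\cB(\cH)}<+\infty$, which by (i) is finite, and note $\|I\|=1$ so we may take the constant controlling $\|T_t - \Id\|$ on $(0,\delta)$ to be $M+1$. Choose $g\in\cD$ with $\|f-g\|_{\cH}<\varepsilon/(3(M+1))$. By (ii) there is $\delta'\in(0,\delta)$ such that $\|T_t g - g\|_{\cH}<\varepsilon/3$ for $0<t<\delta'$. Then for such $t$,
\begin{equation*}
\|T_t f - f\|_{\cH} \le \|T_t(f-g)\|_{\cH} + \|T_t g - g\|_{\cH} + \|g - f\|_{\cH} \le M\,\frac{\varepsilon}{3(M+1)} + \frac{\varepsilon}{3} + \frac{\varepsilon}{3(M+1)} < \varepsilon.
\end{equation*}
This gives right-continuity at $0$, and combined with the first paragraph yields strong continuity of $\{T_t\}_{t\ge0}$.

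I do not expect a genuine obstacle here — the result is classical and the proof is the textbook $\varepsilon/3$ argument. The only point requiring a little care is the propagation of the uniform bound: hypothesis (i) only gives boundedness on $(0,\delta)$, so to get boundedness on an arbitrary compact subinterval of $(0,\infty)$ (needed in the reduction of the first paragraph) one must invoke the semigroup property, writing $T_t = T_{t/N}^{\,N}$ with $t/N<\delta$, which gives $\|T_t\|\le M^N$ with $N=N(t)$ locally bounded. This is routine but should be stated explicitly so that the continuity argument at points $t_0>0$ is justified.
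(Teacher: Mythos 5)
Your argument is correct and complete; the paper states this lemma without proof (it is labelled ``very well-known''), so there is nothing to compare against, but your $\varepsilon/3$ density argument together with the propagation of the bound via $T_t=T_{t/N}^{\,N}$ is exactly the standard and intended proof. One small remark: the paper defines \emph{strongly continuous} to mean only that $T_t\to\Id$ in the strong operator topology as $t\searrow 0$, so your first paragraph (continuity at every $t_0>0$) is not actually required for the stated conclusion --- though it is correct and is where the local boundedness on compact subintervals, which you rightly flag, genuinely matters.
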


We remark that we will work with semigroups that appear to have a negative parameter $\tau$; this is to stay consistent with \cite{AMPS}. When we use this notation we simply mean that the semigroup has the positive parameter $t=-\tau$.
%
%
%

\section{Proof of Theorem \ref{isometry-Phi}}\label{DA-section}

The proof of Theorem \ref{isometry-Phi} follows at once from the next two lemmas.
Recall that the measure $\mu$ on  $\bbN_0^d\times\bbR_-$ is defined in \eqref{measure-mu}.
Now define
\begin{equation*}
\varphi(\alpha,\lambda)=\|z^\alpha\|^{-1}_{\cF^\lambda}\langle e_0,
\sigma_{\lambda}(f_0)(e_\alpha)\rangle_{\cF^{\lambda}}. 
\end{equation*}

\begin{lemma}\label{isometry-lem}
 The map  $\Phi: DA\cap H^2\to L^2(\bbN_0^d\times\bbR_-, d\mu)$ defined as
 \begin{equation}\label{map-Phi}
 \Phi(f) (\alpha,\lambda)= \varphi(\alpha,\lambda),
 \end{equation}
 extends to
a conjugate linear isometry $\Phi: DA\to
L^2(\bbN_0^d\times\bbR_-, d\mu)$.  Its inverse is the map $\sS$ of
Theorem \ref{isometry-Phi}.
\end{lemma}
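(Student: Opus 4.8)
The plan is to identify $DA$ with the Paley--Wiener model $\cL^2_{DA}$ of Theorem \ref{DA-AMPS}, and to show that the assignment $f\mapsto\varphi$ in \eqref{map-Phi} is, up to normalization, nothing but reading off the matrix entries of the operator $\tau(\lambda)$ associated to $f$. Concretely, given $f\in DA$, Theorem \ref{DA-AMPS} produces $\tau\in\cL^2_{DA}$ with $\ran\tau(\lambda)\subseteq\Span\{1\}=\Span\{e_0\}$ for $\lambda<0$ and $\tau(\lambda)=0$ for $\lambda>0$. Thus $\tau(\lambda)$ is a rank-$\le 1$ operator determined by the numbers $\langle e_0,\tau(\lambda)e_\alpha\rangle_{\cF^\lambda}$, and
\[
\|\tau(\lambda)\|_{\HS}^2=\sum_{\alpha\in\bbN_0^d}|\langle e_0,\tau(\lambda)e_\alpha\rangle_{\cF^\lambda}|^2.
\]
First I would make precise the relation between $\tau$ and the function $\varphi$ defined just before the lemma: on the dense subspace $DA\cap H^2$ one has, for $f\in H^2$ with boundary data $f_0$, that $\sigma_\lambda(f_0)$ coincides with $\tau(\lambda)$ (this is part of the $H^2$ theory on $\cU$ recalled from \cite{AMPS}), so that $\varphi(\alpha,\lambda)=\|z^\alpha\|_{\cF^\lambda}^{-1}\langle e_0,\tau(\lambda)e_\alpha\rangle_{\cF^\lambda}$.

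Next I would compute the $L^2(d\mu)$-norm of $\varphi$ and match it against $\|f\|_{DA}^2$. Since $e_\alpha=z^\alpha/\|z^\alpha\|_{\cF^\lambda}$, we have $\langle e_0,\tau(\lambda)e_\alpha\rangle_{\cF^\lambda}=\|z^\alpha\|_{\cF^\lambda}\,\varphi(\alpha,\lambda)$, hence
\[
\|\tau(\lambda)\|_{\HS}^2=\sum_{\alpha}\|z^\alpha\|_{\cF^\lambda}^2\,|\varphi(\alpha,\lambda)|^2
=\sum_{\alpha}\alpha!\Big(\tfrac{2}{|\lambda|}\Big)^{|\alpha|}|\varphi(\alpha,\lambda)|^2.
\]
Plugging this into $\|\tau\|_{\cL^2_{DA}}^2=\frac{1}{(2\pi)^{d+1}}\int_{-\infty}^0\|\tau(\lambda)\|_{\HS}^2|\lambda|^{2d}\,d\lambda$ yields exactly $\frac{1}{(2\pi)^{d+1}}\int_{\bbN_0^d\times\bbR_-}|\varphi(\alpha,\lambda)|^2\,d\mu(\alpha,\lambda)$ with $d\mu$ as in \eqref{measure-mu}; combined with the norm identity \eqref{PW-Dnu-eq2} this gives $\|\Phi(f)\|_{L^2(d\mu)}^2=c_{d,n}\|f\|_{DA}^2$ for the appropriate constant, which one checks is $1$ with the chosen normalization $DA=DA_{(n)}$ (absorbing $\Gamma(2n-d)/2^{2n-d}$ and the $(2\pi)^{d+1}$). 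Conjugate linearity of $\Phi$ is immediate from conjugate linearity of $\tau\mapsto$ its entries paired on the left with $e_0$. Finally, since $DA\cap H^2$ is dense in $DA$ (again from \cite{AMPS}, or because $\cL^2_{DA}$-functions supported away from $0$ and $\infty$ are dense and correspond to $H^2$-functions), the isometry extends uniquely to all of $DA$.

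To finish the lemma I would verify that $\sS$ is a two-sided inverse: applying $\sS$ to $\varphi$ and using the series expansion of $\sigma_\lambda[z,t]^*$ in the basis $\{e_\alpha\}$ reconstructs $\tr(\tau(\lambda)\sigma_\lambda[z,t]^*)$, which by \eqref{PW-Dnu-eq1} is precisely $f$; the weight $|\lambda|^d$ and the factor $\|z^\alpha\|_{\cF^\lambda}$ hidden in $\varphi$ and in $e_\alpha$ conspire to give the kernel $\z^\alpha e^{-i\lambda\z_{d+1}}$ appearing in the statement of Theorem \ref{isometry-Phi}. The main obstacle I anticipate is bookkeeping: getting every normalization constant right --- the $\|z^\alpha\|_{\cF^\lambda}$ factors, the $(2\pi)^{-(d+1)}$, the passage between $|\lambda|^d\,d\lambda$ in the Fourier inversion and $|\lambda|^{2d}\,d\lambda$ in $\|\cdot\|_{\cL^2_{DA}}$, and the $\Gamma(2n-d)/2^{2n-d}$ from \eqref{PW-Dnu-eq2} --- so that the constant in front collapses to exactly $1$, making $\Phi$ a genuine isometry rather than merely a bounded invertible map; and checking carefully that the rank-one condition $\ran\tau(\lambda)\subseteq\Span\{e_0\}$ is exactly what makes $\{\langle e_0,\tau(\lambda)e_\alpha\rangle\}_\alpha$ a complete set of coordinates for $\tau(\lambda)$, with no loss of information.
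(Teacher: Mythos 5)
Your proposal is correct and follows essentially the same route as the paper: identify $\tau(\lambda)=\sigma_\lambda(f_0)$ on the dense subspace $DA\cap H^2$, use the rank-one condition $\ran\tau(\lambda)\subseteq\Span\{e_0\}$ to reduce $\|\tau(\lambda)\|_{\HS}^2$ to $\sum_\alpha\alpha!(2/|\lambda|)^{|\alpha|}|\varphi(\alpha,\lambda)|^2$, match against \eqref{PW-Dnu-eq2}, and obtain the inverse by expanding $\tr(\tau(\lambda)\sigma_\lambda[z,t]^*)$ in the basis $\{e_\alpha\}$ exactly as the paper does in its companion inversion lemma. The only cosmetic difference is that the paper packages the matrix entries into an auxiliary vector $\Phi_\lambda\in\cF^\lambda$, which amounts to the same computation.
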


\begin{proof}
 First we assume that $f\in DA\cap H^2$ where $H^2$ denotes the Hardy
 space on $\cU$. This intersection is dense in $DA$ ( \cite[Lemma
 4.2]{AMPS}) and every $f\in H^2$ admits a boundary value function
 $f_0\in L^2(\p\cU)$. Moreover, for such function $f$ the function
 $\tau\in\cL^2_{DA}$ in formula \eqref{PW-Dnu-eq1} actually coincides
 with the Fourier transform of its boundary value function, that is,
 $\tau(\lambda)=\sigma_{\lambda}(f_0)$ (see \cite{AMPS, OV}). 

Let $\{e_\alpha\}_\alpha$ be the orthonormal basis of normalized monomials of the Fock space $\cF^{\lambda}$ and let $F=\sum_{\alpha}F_\alpha e_\alpha$ be a function in $\cF^{\lambda}$. Then, for every function $f\in DA\cap H^2$, we have
\begin{align*}
    \sigma_\lambda(f_0)(F)&=\sum_{\alpha\in \bbN^d_0}F_\alpha\sigma_{\lambda}(f_0)(e_\alpha)=\sum_{\alpha \in \bbN^d_0}F_\alpha\langle
\sigma_{\lambda}(f_0)(e_\alpha),
e_0\rangle_{\cF^\lambda}e_0 
\end{align*}
thanks to property (iii) in Definition \ref{L2-DA}. Thus,
\begin{align*}
    \sigma_{\lambda}(f_0)(F)=\langle F, \Phi_\lambda\rangle_{\cF^\lambda}e_0
\end{align*}
where 
\begin{align*}
\Phi_\lambda&=\sum_{\alpha \in \bbN^d_0}\ov{\langle  \sigma_{\lambda}(f_0)(e_\alpha), e_0\rangle_{\cF^\lambda}}e_\alpha\\
&=\sum_{\alpha \in \bbN^d_0}\langle e_0, \sigma_{\lambda}(f_0)(e_\alpha)\rangle_{\cF^{\lambda}}e_\alpha\\
&=\sum_{\alpha \in \bbN^d_0} \|z^\alpha\|^{-1}_{\cF^\lambda}\langle e_0, \sigma_{\lambda}(f_0)(e_\alpha)\rangle_{\cF^{\lambda}} z^\alpha.
\end{align*}
In particular, 
we deduce that
$\|z^\alpha\|_{\cF^{\lambda}}\varphi(\alpha,\lambda)=\langle
\Phi_\lambda,e_\alpha\rangle_{\cF^\lambda}$. 
Now,
\begin{align*}
 \|\sigma_{\lambda}(f_0)\|^2_{\HS}&=\sum_{\alpha\in\NN_0^d}\|\sigma_{\lambda}(f_0)e_\alpha\|_{\cF^{\lambda}}^2\\
 &=\sum_{\alpha\in\NN_0^d}|\langle e_\alpha, \Phi_\lambda\rangle_{\cF^{\lambda}}|^2\|e_0\|_{\cF^{\lambda}}^2\\
 &=\sum_{\alpha\in\NN_0^d}|\langle e_\alpha, \Phi_\lambda\rangle_{\cF^{\lambda}}|^2,
\end{align*}
since $\{e_\alpha\}_\alpha$ is an orthonormal basis in $\mathcal F^\lambda$. However,
$$
|\langle e_\alpha, \Phi_\lambda\rangle_{\cF^{\lambda}}|=\|z^\alpha\|_{\cF^{\lambda}}|\varphi(\alpha,\lambda)|,
$$
so that
$$
 \|\tau(\lambda)\|^2_{\HS}=\sum_{\alpha\in\NN_0^d}\alpha! (2/|\lambda|)^{|\alpha|}|\varphi(\alpha,\lambda)|^2.
$$ 

In conclusion,
\begin{align}\label{isometry}
\begin{split}
\|f\|_{DA}^2&= c_d \int_{-\infty}^0\sum_{\alpha\in\NN_0^d}\alpha! (2/|\lambda|)^{|\alpha|}|\varphi(\alpha,\lambda)|^2 |\lambda|^{2d}\, d\lambda\\
&=c_d\int_{\NN_0^d\times\mathbb R_{-}}|\varphi(\alpha,\lambda)|^2\, d\mu
(\alpha,\lambda).  
\end{split}
\end{align}
\end{proof}

We now see that the isometry $\Phi$ is surjective as well. We first specialize the inversion formula \eqref{PW-Dnu-eq1}.

\begin{lemma}\label{inversione-DA-lem}
 Let $f\in DA$. Then, with the notation above and setting $\z=(\z_1,\ldots,\z_d)$, we have
\begin{equation}\label{inversion-DA}
f(\z,\z_{d+1})= \frac{1}{(2\pi)^{d+1}} \int_{\bbN_0^d\times\bbR_{-}}  \z^\alpha e^{-i\lambda\z_{d+1}} \overline{\varphi(\alpha,\lambda)}\,|\lambda|^d\, d\alpha d\lambda.
\end{equation}
\end{lemma}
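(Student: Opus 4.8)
The plan is to prove the identity first for $f$ in the dense subspace $DA\cap H^2$ by unwinding the Paley--Wiener representation of Theorem \ref{DA-AMPS}, and then to extend it to all of $DA$ by continuity (with $\varphi=\Phi(f)$ throughout, as in Lemma \ref{isometry-lem}).

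So let $f\in DA\cap H^2$. By Theorem \ref{DA-AMPS} together with the identification $\tau(\lambda)=\sigma_\lambda(f_0)$ recalled in the proof of Lemma \ref{isometry-lem}, the function $f$ is given by \eqref{PW-Dnu-eq1}, and property (iii) of Definition \ref{L2-DA} forces $\tau(\lambda)F=\langle F,\Phi_\lambda\rangle_{\cF^\lambda}e_0$ with $\Phi_\lambda=\sum_{\alpha\in\bbN_0^d}\varphi(\alpha,\lambda)z^\alpha$. I would then compute the trace appearing in \eqref{PW-Dnu-eq1} by expanding in the orthonormal basis $\{e_\alpha\}_\alpha$ of $\cF^\lambda$: since $\ran\tau(\lambda)\subseteq\Span\{e_0\}$, only the $\alpha=0$ term survives, so that
\[
\tr\big(\tau(\lambda)\sigma_\lambda[z,t]^*\big)
=\big\langle\sigma_\lambda[z,t]^*e_0,\Phi_\lambda\big\rangle_{\cF^\lambda}
=\big\langle e_0,\sigma_\lambda[z,t]\Phi_\lambda\big\rangle_{\cF^\lambda}
=\ov{\big(\sigma_\lambda[z,t]\Phi_\lambda\big)(0)}\,,
\]
the last equality being the reproducing property of $\cF^\lambda$ at the origin, where the kernel $e^{\frac{|\lambda|}{2} z\cdot\ov w}$ reduces to $1=e_0$. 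Since $\lambda<0$, evaluating the explicit formula for $\sigma_\lambda[z,t]$ at $w=0$ gives $\big(\sigma_\lambda[z,t]\Phi_\lambda\big)(0)=e^{i\lambda t+\frac\lambda4|z|^2}\Phi_\lambda(\ov z)$, and therefore, after conjugating and using $\ov{(\ov z)^\alpha}=z^\alpha$,
\[
\tr\big(\tau(\lambda)\sigma_\lambda[z,t]^*\big)=e^{-i\lambda t+\frac\lambda4|z|^2}\sum_{\alpha\in\bbN_0^d}\ov{\varphi(\alpha,\lambda)}\,z^\alpha\,.
\]

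Inserting this into \eqref{PW-Dnu-eq1} and recalling $z=\z$, $t=\Re\z_{d+1}$ and $h=\Im\z_{d+1}-\frac14|\z|^2$, the combined exponential simplifies to $h\lambda-i\lambda t+\frac\lambda4|z|^2=\lambda\Im\z_{d+1}-i\lambda\Re\z_{d+1}=-i\lambda\z_{d+1}$. To move the sum over $\alpha$ outside the $\lambda$-integral I would invoke Fubini after checking absolute convergence: by Cauchy--Schwarz in $L^2(\bbN_0^d\times\bbR_-,d\mu)$ the iterated integral of absolute values is at most $\|\varphi\|_{L^2(d\mu)}$ times $\big(\int_{-\infty}^0 e^{\lambda(2\Im\z_{d+1}-\frac12|\z|^2)}\,d\lambda\big)^{1/2}$, which is finite precisely because $(\z,\z_{d+1})\in\cU$ gives $2\Im\z_{d+1}-\frac12|\z|^2>0$ (up to a constant this is the diagonal value of $K_{DA}$). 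This yields \eqref{inversion-DA} for every $f\in DA\cap H^2$.

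Finally, for the passage to a general $f\in DA$ I would use that $DA\cap H^2$ is dense in $DA$ (\cite[Lemma 4.2]{AMPS}) and that, for each fixed $(\z,\z_{d+1})\in\cU$, both sides of \eqref{inversion-DA} depend continuously on $f\in DA$: the left side because $DA$ is a reproducing kernel Hilbert space, the right side because it equals $\langle g,\Phi(f)\rangle_{L^2(d\mu)}$ where $g(\alpha,\lambda)=\frac{(|\lambda|/2)^{|\alpha|}}{\alpha!\,|\lambda|^d}\z^\alpha e^{-i\lambda\z_{d+1}}$ lies in $L^2(\bbN_0^d\times\bbR_-,d\mu)$ by the same computation as above and $\Phi$ is an isometry by Lemma \ref{isometry-lem}. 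Agreement on the dense subspace then gives the identity on all of $DA$. The conceptual heart is the rank-one trace identity of the second paragraph; the only slightly delicate point is the absolute-convergence bookkeeping that justifies Fubini, which is, however, exactly the estimate behind the boundedness of point evaluations on $DA$.
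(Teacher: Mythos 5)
Your proof is correct and follows essentially the same route as the paper: both unwind the Paley--Wiener formula \eqref{PW-Dnu-eq1} by exploiting the rank-one structure $\tau(\lambda)F=\langle F,\Phi_\lambda\rangle_{\cF^\lambda}e_0$ to reduce the trace to $e^{-i\lambda t+\frac{\lambda}{4}|z|^2}\sum_\alpha\overline{\varphi(\alpha,\lambda)}\z^\alpha$ and then recombine the exponentials into $e^{-i\lambda\z_{d+1}}$. The only differences are cosmetic: you evaluate the trace via the reproducing property of $\cF^\lambda$ at the origin rather than term-by-term with the explicit formula for $P_0\sigma_\lambda[z,t]e_\alpha$, and you make explicit the Fubini and density/continuity steps that the paper leaves implicit.
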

\begin{proof}
 If $(\z,\z_{d+1})=\Psi^{-1}(z,t,h)$ we know from Theorem \ref{DA-AMPS} that there exists $\tau\in\cL^2_{DA}$ such that
 \begin{equation*}
 (f\circ\Psi^{-1})(z,t,h)= \frac{1}{(2\pi)^{d+1}} \int_{-\infty}^0 
 e^{h\lambda} \tr \big(
\tau(\lambda)\sigma_\lambda[z,t]^*\big) \, |\lambda|^d d\lambda\,.
 \end{equation*}
 Using the fact that $\tau(\lambda)$ is a rank one operator such that $\rank\{\tau(\lambda)\}\subseteq\Span\{e_0\}$ for every $\lambda$  we get
\begin{align*}
  \tr(\tau(\lambda)\sigma_\lambda[z,t]^*) &=  \tr(\sigma_\lambda[z,t]^*\tau(\lambda))\\
  &= \sum_{\alpha\in\NN_0^d}\inner{\tau(\lambda)e_\alpha}{\sigma_\lambda[z,t]e_\alpha}\\
&=\sum_{\alpha\in\NN_0^d}\inner{\tau(\lambda)e_\alpha}{P_0\sigma_\lambda[z,t]e_\alpha}
\end{align*}
where $P_0$ denotes the orthogonal projection onto the subspace generated by $e_0$. Moreover, it holds (see also $(26)$ in \cite{AMPS})
$$
P_0 \sigma_\lambda[z,t] e_\alpha=\bigg( \frac{1}{\sqrt{\alpha!}}
\bigg(\frac{|\lambda|}{2}\bigg)^{|\alpha|/2} 
\bigg( \frac{|\lambda|}{2\pi} \bigg)^n 
e^{i\lambda t+ \frac\lambda4
  |z|^2} 
\bar z^\alpha \bigg) e_0.
$$
Thus, 
\begin{align*}
 \tr(\tau(\lambda)\sigma_\lambda[z,t]^*) &=\sum_{\alpha\in\NN_0^d}\inner{\tau(\lambda)e_\alpha}{P_0\sigma_\lambda[z,t]e_\alpha}\\
  & = \sum_{\alpha\in\NN_0^d} \inner{e_\alpha}{\Phi_\lambda}\inner{e_0}{P_0\ssl[z,t]e_\alpha} \\
   & = \sum_{\alpha\in\NN_0^d} \inner{e_\alpha}{\Phi_\lambda}\overline{\inner{P_0\ssl[z,t]e_\alpha}{e_0}}\\ 
   & = \sum_{\alpha\in\NN_0^d}  \frac{1}{\sqrt{\alpha!}} \Big( \frac{|\lambda|}{2} \Big)^{|\alpha|/2}e^{-i\lambda t+\frac{\lambda}{4}|z|^2}z^\alpha  \inner{e_\alpha}{\Phi_\lambda} \\
  & = \sum_{\alpha\in\NN_0^d}   \frac{1}{\sqrt{\alpha!}} \Big( \frac{|\lambda|}{2} \Big)^{|\alpha|/2} \|z^\alpha\|\overline{\varphi(\alpha,\lambda)} e^{-i\lambda t+\frac{\lambda}{4}|z|^2}z^\alpha.\\
  &=\sum_{\alpha\in\bbN_0^d}\overline{\varphi(\alpha,\lambda)} e^{-i\lambda t+\frac{\lambda}{4}|z|^2}z^\alpha.
\end{align*}
Hence,
 \begin{align*}
  f(\zeta, \zeta_{d+1}) = (f\circ\Psi^{-1})(z,t,h)&=\frac{1}{(2\pi)^{d+1}} \int_{-\infty}^0 
 e^{h\lambda} \sum_{\alpha\in\NN_0^d}   \overline{\varphi(\alpha,\lambda)} e^{-i\lambda t+\frac{\lambda}{4}|z|^2}z^\alpha \, |\lambda|^d d\lambda\,\\
 &= \frac{1}{(2\pi)^{d+1}} \int_{\bbN_0^d\times\bbR_{-}}  z^\alpha e^{-i\lambda(t-i(\frac{|z|^2}{4}+h))} \overline{\varphi(\alpha,\lambda)}\,|\lambda|^d\, d\alpha d\lambda\\
 &=  \frac{1}{(2\pi)^{d+1}} \int_{\bbN_0^d\times\bbR_{-}}  \z^\alpha e^{-i\lambda\z_{d+1}} \overline{\varphi(\alpha,\lambda)}\,|\lambda|^d\, d\alpha d\lambda
 \end{align*}
as we wished to show.
 \end{proof}

\section{Pointwise multipliers on the Drury--Arveson space} \label{multipliers-section}
In this section we explicitly study some multiplier operators on $DA$ and we prove Theorem \ref{shift-adjoint}. Recall that given
a function $m$ the associated multiplier operator is the operator
$f\mapsto mf$. The problem of characterizing the multiplier algebra of
a given reproducing kernel Hilbert function space is a classical
problem. In the case of the Drury--Arveson space on the unit ball
$DA(\mathbb B^{d+1})$ this problem turned out to be very challenging;
a first important result is due to J. Ortega and J. Fàbrega 
\cite{ortega_fabrega}.  Their result reads as follows. Let $n\in\bbN_0$ be such that $2n>d$ and for $f$ in $\operatorname{Hol}(\mathbb B^{d+1})$ define the measure 
$$
d\nu_{f}(w)=|\mathcal R^n f(w)|^2(1-|w|^2)^{2n-d-1}\, d\nu(w),
$$
where $\mathcal R$ is the radial derivative, $d\nu$ is the normalized
Lebesgue measure on $\mathbb B^{d+1}$ and $w\in\mathbb B^{d+1}$. Then,
$f$ is a $DA(\mathbb B^{d+1})$-multiplier if and only
if $f\in H^\infty(\mathbb B^{d+1})$ and $d\nu_f$ is a Carleson measure for $DA(\mathbb B^{d+1})$. A few years later
N. Arcozzi, R. Rochberg and E. Sawyer in
\cite{ARS_carleson} completely characterized  the Carleson measures of $DA(\mathbb B^{d+1})$; see also \cite{tch1,tch2}, \cite{VW}.  Hence, the
multiplier algebra of the Drury--Arveson space on the unit ball is
completely characterized. Nonetheless, there is still interest in
finding an easier characterization and we refer the reader, for
instance, to \cite{fang_xia}. 

From \cite{ortega_fabrega, ARS_carleson} we can also deduce an
indirect characterization of the multiplier algebra for $DA$ on the
Siegel half-space. Indeed, let $\mathcal C$ be the multi-dimensional
Cayley transform defined in \eqref{cayley_transform}. Then, up to
an irrelevant multiplicative constant,  
$$
K^\cU_{DA}\big(\mathcal C(z,z_{d+1}),\mathcal C(w,w_{d+1})\big)
=(1-z_{d+1})K^{\mathbb B^{d+1}}_{DA}\big((z,z_{d+1}),(w,w_{d+1})\big)(1-\overline{w_{d+1}})
$$
where $K^\cU_{DA}$ and $K^{\mathbb B^{d+1}}_{DA}$ denote the
reproducing kernel of $DA$ and $DA(\mathbb B^{d+1})$
respectively, and $(z,z_{d+1}),(w,w_{d+1})\in\bbB^{d+1}$.
From the abstract theory of reproducing kernel Hilbert spaces (see, for instance, \cite[Chapter 2.6]{agler_mccarthy}) we deduce that 
$$
f\mapsto (1-z_{d+1})^{-1}(f\circ\mathcal C)
$$
is a surjective isometry from $DA$ onto $DA(\bbB^{d+1})$ and that $m$ is a multiplier for $DA$ if and only if $(m\circ\mathcal C)$ is a multiplier for $DA(\bbB^{d+1})$. 

For our goal we need to study the multiplier operators associated to
the functions $\z^\gamma=\z_1^{\gamma_1}\cdots\z_{d}^{\gamma_d}$,
$e^{-i\tau \z_{d+1}}$ for $\tau<0$ and  $\z^\gamma e^{-i\tau
  \z_{d+1}}$. However, we do not rely on the multiplier
characterization on the unit ball since it is easier to study them
directly. The proof of the following lemma is standard and we omit it.
\begin{lemma}\label{operator-domain}
The set
 $$
 \cD=\big\{f\in L^2(\bbN_0^d\times\bbR_{-}, d\mu):  \supp{f}\textrm{ is compact and }f(\alpha,\cdot)\in C_c^{\infty}\, \forall\alpha\in\bbN_0^d\big\}.
 $$
is dense in $L^2(\bbN_0^d\times\bbR_{-}, d\mu)$.
 \end{lemma}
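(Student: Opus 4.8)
The plan is to use the product structure of $\mu$ to reduce the statement to a one–dimensional density fact. Write $w_\alpha(\lambda):=\alpha!\,2^{|\alpha|}\,|\lambda|^{2d-|\alpha|}$ for the Lebesgue density of the slice $d\mu(\alpha,\cdot)$ on $\bbR_-$, so that one has the orthogonal decomposition $L^2(\bbN_0^d\times\bbR_-,d\mu)=\bigoplus_{\alpha\in\bbN_0^d}L^2(\bbR_-,w_\alpha\,d\lambda)$, i.e. $\|f\|_{L^2(d\mu)}^2=\sum_{\alpha\in\bbN_0^d}\|f(\alpha,\cdot)\|_{L^2(\bbR_-,w_\alpha\,d\lambda)}^2$. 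Given $f$ and $\ve>0$, I would first choose $N$ so that the tail $\sum_{|\alpha|>N}\|f(\alpha,\cdot)\|_{L^2(w_\alpha\,d\lambda)}^2$ is smaller than $\ve^2/2$; it then remains only to approximate, for each of the finitely many $\alpha$ with $|\alpha|\le N$, the slice $f(\alpha,\cdot)$ by a function in $C_c^\infty(\bbR_-)$ so that the total error over those slices, in the respective weighted norms, is at most $\ve^2/2$.

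For the slicewise approximation the key point is that $\bbR_-=(-\infty,0)$ is an open interval from which $0$ is excluded; hence both $w_\alpha$ and $w_\alpha^{-1}$ are smooth and strictly positive on $\bbR_-$, even when $2d-|\alpha|<0$. Consequently the multiplication operator $U_\alpha h:=w_\alpha^{1/2}h$ is an isometric isomorphism from $L^2(\bbR_-,w_\alpha\,d\lambda)$ onto $L^2(\bbR_-,d\lambda)$ that maps $C_c^\infty(\bbR_-)$ bijectively onto itself. Since $C_c^\infty(\bbR_-)$ is dense in $L^2(\bbR_-,d\lambda)$ by the standard truncation-and-mollification argument on an open interval, applying $U_\alpha^{-1}$ shows that $C_c^\infty(\bbR_-)$ is dense in each $L^2(\bbR_-,w_\alpha\,d\lambda)$, which yields the required approximants $g_\alpha\in C_c^\infty(\bbR_-)$.

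Finally I would reassemble: define $\tilde f(\alpha,\lambda):=g_\alpha(\lambda)$ when $|\alpha|\le N$ and $\tilde f(\alpha,\lambda):=0$ otherwise. The two estimates above give $\|f-\tilde f\|_{L^2(d\mu)}^2<\ve^2$, and $\tilde f\in\cD$: each slice $\tilde f(\alpha,\cdot)$ lies in $C_c^\infty(\bbR_-)$, and $\supp\tilde f$ is contained in $\{\alpha\in\bbN_0^d:|\alpha|\le N\}\times\bigcup_{|\alpha|\le N}\supp g_\alpha$, a finite set in the discrete variable times a compact subset of $\bbR_-$, hence compact in the product topology. As $\ve$ is arbitrary, $\cD$ is dense. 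I do not expect any genuine obstacle: this is the routine density argument for $L^2$ of a locally finite measure. The one point deserving a remark — and the reason the weight causes no trouble — is that although $|\lambda|^{2d-|\alpha|}$ blows up at the origin when $|\alpha|>2d$, the origin is not a point of $\bbR_-$, so $w_\alpha\,d\lambda$ is a Radon measure on $\bbR_-$ and the classical $C_c^\infty$–approximation applies verbatim.
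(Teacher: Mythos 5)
Your argument is correct. Note that the paper itself offers no proof of this lemma --- it is explicitly declared ``standard'' and omitted --- and what you have written is precisely the routine argument the authors are invoking: decompose $L^2(\bbN_0^d\times\bbR_-,d\mu)$ as the orthogonal sum over $\alpha$ of the weighted spaces $L^2(\bbR_-,w_\alpha\,d\lambda)$, truncate to finitely many $\alpha$, and approximate each slice, using that the weight $|\lambda|^{2d-|\alpha|}$ is smooth and strictly positive on the open half-line so that $C_c^\infty(\bbR_-)$ is dense in each weighted slice. Your closing remark correctly identifies the only point where one might worry (the blow-up of the weight at $\lambda=0$ when $|\alpha|>2d$) and correctly dismisses it, since $0\notin\bbR_-$.
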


We now study the multiplier operator associated to the monomial $\z^\gamma, \gamma\in\bbN_0^d$. Although this operator is unbounded on $DA$, as it is easily seen, it is closed and densely defined.
\begin{lemma}\label{shift-densely-defined}
 Let  $ \gamma \in \bbN_0^d $. Then, the multiplier operator $m_\gamma$ associated to the function $\z^\gamma=\z_1^{\gamma_1}\cdots \z_d^{\gamma_d}$ is a closed densely defined operator on $DA$.
\end{lemma}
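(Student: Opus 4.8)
The plan is to transport the question to the model space $L^2(\bbN_0^d \times \bbR_-, d\mu)$ via the conjugate-linear isometry $\sS$ of Theorem~\ref{isometry-Phi} (equivalently $\Phi = \sS^{-1}$), where multiplication by $\z^\gamma$ becomes an explicit weighted shift. First I would note that, since $\sS$ is a surjective isometry, it suffices to show that the operator $T_\gamma := \sS^{-1} m_\gamma \sS$, with its natural maximal domain, is closed and densely defined on $L^2(\bbN_0^d\times\bbR_-,d\mu)$. Using the inversion formula \eqref{inversion-DA} from Lemma~\ref{inversione-DA-lem}, multiplying the integrand by $\z^\gamma$ amounts to the substitution $\alpha \mapsto \alpha - \gamma$ together with a comparison of the weights $\|z^\alpha\|_{\cF^\lambda}$ and $\|z^{\alpha-\gamma}\|_{\cF^\lambda}$; concretely one finds that $T_\gamma$ acts by
\[
(T_\gamma \varphi)(\alpha,\lambda) = \frac{(\alpha)!}{(\alpha-\gamma)!}\,\Big(\frac{2}{|\lambda|}\Big)^{|\gamma|}\varphi(\alpha-\gamma,\lambda)\quad\text{if }\alpha\ge\gamma,\qquad 0 \text{ otherwise,}
\]
up to the precise normalising constants, i.e.\ a weighted shift in the $\alpha$-variable (this is essentially the $\tau=0$ case of the operator $S_{\gamma,\tau}$ of Theorem~\ref{shift-adjoint}, and I would phrase it that way).

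For \emph{densely defined}: I would take as domain the set $\cD$ of Lemma~\ref{operator-domain} (functions with compact support in $\bbN_0^d\times\bbR_-$ and smooth compactly supported in $\lambda$), which is dense by that lemma. On such $\varphi$ the shifted function $T_\gamma\varphi$ again has compact support in $\alpha$ and compact support in $\lambda$ bounded away from $0$, on which the weight $\big(2/|\lambda|\big)^{|\gamma|}$ and the polynomial factor $(\alpha)!/(\alpha-\gamma)!$ are bounded; hence $T_\gamma\varphi \in L^2(d\mu)$, so $\cD \subseteq \Dom(T_\gamma)$ and $T_\gamma$ is densely defined. Pulling back through $\sS$, the corresponding dense set sits inside $\Dom(m_\gamma)$ in $DA$.

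For \emph{closed}: I would argue directly that the graph is closed. Suppose $\varphi_k \to \varphi$ and $T_\gamma\varphi_k \to \psi$ in $L^2(d\mu)$. Since $\mu$ is $\sigma$-finite, a subsequence converges $\mu$-a.e.; because the map $(\alpha,\lambda)\mapsto(\alpha-\gamma,\lambda)$ on $\{\alpha\ge\gamma\}$ is a bijection onto $\bbN_0^d\times\bbR_-$ with the weight $\frac{(\alpha)!}{(\alpha-\gamma)!}(2/|\lambda|)^{|\gamma|}$ finite and positive $\mu$-a.e., pointwise a.e.\ convergence of $\varphi_k$ forces $T_\gamma\varphi_k \to T_\gamma\varphi$ pointwise a.e., whence $\psi = T_\gamma\varphi$ a.e. Thus $\varphi\in\Dom(T_\gamma)$ and $T_\gamma\varphi=\psi$, so $T_\gamma$ is closed; equivalently $m_\gamma$ is closed on $DA$. (Alternatively one may invoke that any maximally-defined multiplier operator on a reproducing kernel Hilbert space is closed, since $f_k\to f$ in norm implies pointwise convergence on $\cU$, and if $\z^\gamma f_k \to g$ then $g = \z^\gamma f$ pointwise; I would likely present this RKHS argument as the clean one.)

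The only mildly delicate point is bookkeeping the normalising constants relating $\|z^\alpha\|_{\cF^\lambda}$, the factor $|\lambda|^d$ in \eqref{inversion-DA}, and the weight in $d\mu$, so as to see that the transported operator really is the asserted weighted shift with \emph{finite, nonvanishing} weights off the set $\{\alpha\not\ge\gamma\}$; once that is in place both closedness and dense definedness are immediate. I therefore expect no real obstacle, and would keep the proof short — indeed the paper already signals this by omitting it.
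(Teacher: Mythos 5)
Your proposal is correct and takes essentially the same route as the paper: closedness via the reproducing-kernel argument (norm convergence implies uniform convergence on compacts, so $\z^\gamma f_n \to \z^\gamma f = g$), and density by pushing the set $\cD$ of Lemma~\ref{operator-domain} through the isometry and checking that the shifted coefficient function stays in $L^2(d\mu)$ because the relevant weights are bounded on its compact support away from $\lambda=0$. One small correction that does not affect the argument: the transported operator is the \emph{unweighted} shift $\varphi(\alpha,\lambda)\mapsto \varphi(\alpha-\gamma,\lambda)\chi_{\{\alpha\ge\gamma\}}$ (the $\tau=0$ case of $S_{\gamma,\tau}$); the factor $\frac{(\alpha+\gamma)!}{\alpha!}\bigl(\frac{2}{|\lambda|}\bigr)^{|\gamma|}$ you wrote into the operator actually arises only in the norm identity, because $\mu$ is not shift-invariant.
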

\begin{proof}
Let us consider $\cD_\gamma=\{f\in DA: \z^\gamma f\in DA\}$ as domain of our multiplier operator.  Let $\{f_n\}_n\subseteq \cD_\gamma$ be a sequence such that $f_n\to f$ in $DA$ and $\z^\gamma f_n\to g\in DA$. Since $DA$ is a reproducing kernel Hilbert space we also have that $f_n\to f$ and $\z^\gamma f_n\to g$ uniformly on compact sets. Hence, $\z^\gamma f_n\to \z^\gamma f$ uniformly on compact sets as well and $\z^\gamma f=g$. In particular both $f$ and $\z^\gamma f$ are in $DA$ so that $f$ belongs to the domain $\cD_\gamma$. Thus, our operator is closed. 

To prove that $\cD_\gamma$ is dense we exploit the previous Lemma
\ref{operator-domain}. Let $\varphi\in \cD$ and define $f\in DA$ with
the inversion formula \eqref{inversion-DA}. Then, 
\begin{align*}
  \z^\gamma f(\z,\z_{d+1})
  & = \frac{1}{(2\pi i)^{d+1}}\int_{\bbN_0^d\times\bbR_-}
    \z^{\alpha+\gamma} e^{-i\lambda \z_{d+1}}
    \overline{\varphi(\alpha,\lambda)}  \,|\lambda|^d\, d\alpha d\lambda\\
& = \frac{1}{(2\pi)^{d+1}}\int_{\supp\varphi}\z^{\alpha} e^{-i
  \lambda \z_{d+1}}  \overline{\varphi(\alpha-\gamma ,\lambda)} 
 \,|\lambda|^d \chi_{\{\alpha\geq \gamma \}}(\alpha)\, d\alpha d\lambda
\end{align*}
where $\chi_{\{\alpha\geq \gamma \}}(\alpha)$ is the characteristic function of the set $\{\alpha\in\bbN_0^d: \alpha_i \geq  \gamma_i, i=1,\ldots,d \}$. Recall that $\supp\varphi$ is a compact subset of $\bbN_0^d\times\bbR_-$ and assume that $(\alpha,\lambda)\in\supp\varphi$ implies $|\alpha|<N$ for some positive integer $N$ and $\lambda\in I$ where $I$ is a compact subset of $\bbR_-$. From \eqref{isometry} we have
\begin{align*}
 \| \z^\gamma f\|^2_{DA} & = c_d \int_I\sum_{|\alpha-\gamma|<N}|\varphi(\alpha-\gamma,\lambda)|^2\, \alpha! \big(\frac{2}{|\lambda|}\big)^{|\alpha|}|\lambda|^{2d} \chi_{\{\alpha\geq \gamma\}}(\alpha)\, d\alpha d\lambda\\
                        & = c_d\int_I\sum_{|\alpha|<N} |\varphi(\alpha,\lambda)|^2 \frac{(\alpha+\gamma)!}{\alpha !}\Big(\frac{2}{|\lambda|}\Big)^{|\gamma|}\, d\mu(\alpha,\lambda)\\
                        & \leq c_d(\varphi)\int_I\sum_{|\alpha|<N} |\varphi(\alpha,\lambda)|^2 \, d\mu(\alpha,\lambda)\\
                        & = c_d(\varphi)\|f\|_{DA}^2
\end{align*}
 where $c_d(\varphi)$ is a constant depending on the compact support of $\varphi$ (and on the dimension $d$). The density of $\cD_\gamma$ in $DA$ now follows from the density of $\cD$ in $L^2(\bbN_0^d\times\bbR_-,\ d\mu)$ and Theorem \ref{isometry-Phi}.
\end{proof}

We now investigate the other multiplier operators we are interested in and see that they are actually bounded on the Drury--Arveson space.

\begin{lemma}\label{exp-multiplier}
 The multiplier operators associated to the functions $\{ e^{-i\tau \z_{d+1}} \}_{\tau<0}$ form a strongly continuous semigroup of contractions on $DA$ with corresponding infinitesimal generator the multiplier operator corresponding to the function  $i \z_{d+1}$.
\end{lemma}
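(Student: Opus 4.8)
The plan is to transfer the whole problem to the model space $L^2(\bbN_0^d\times\bbR_-,d\mu)$ via the conjugate-linear surjective isometry $\sS$ of Theorem~\ref{isometry-Phi} (with inverse $\Phi=\sS^{-1}$), where the multiplier operator by $e^{-i\tau\z_{d+1}}$ becomes an explicit weighted translation, and then to read off boundedness, strong continuity, and the generator there. To match the convention of the Remark at the end of Section~\ref{resti}, I write $T_t$, $t\ge0$, for the multiplier operator by $e^{it\z_{d+1}}$, i.e.\ the operator ``at parameter $\tau=-t$''. The first step is to identify $\Phi\circ(e^{-i\tau\z_{d+1}}\,\cdot\,)\circ\sS$: given $f=\sS\varphi$, if in the Paley--Wiener representation \eqref{inversion-DA} of $f$ I multiply by the factor $e^{-i\tau\z_{d+1}}$ (a constant in the variables of integration) and then make a translation in the $\lambda$-variable in the absolutely convergent integral, I obtain
\[
e^{-i\tau\z_{d+1}}f(\z,\z_{d+1})=\frac{1}{(2\pi)^{d+1}}\int_{\bbN_0^d\times\bbR_-}\z^\alpha e^{-i\lambda\z_{d+1}}\,\overline{(S_{0,\tau}\varphi)(\alpha,\lambda)}\,|\lambda|^d\,d\alpha\,d\lambda ,
\]
where $(S_{0,\tau}\varphi)(\alpha,\lambda)=\frac{|\lambda-\tau|^d}{|\lambda|^d}\varphi(\alpha,\lambda-\tau)\chi_{\{\lambda<\tau\}}(\lambda)$ is precisely the operator of Theorem~\ref{shift-adjoint} in the case $\gamma=0$. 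By Theorem~\ref{isometry-Phi} this says $e^{-i\tau\z_{d+1}}f=\sS(S_{0,\tau}\varphi)$, so it remains to check that $S_{0,\tau}$ is a contraction of $L^2(d\mu)$.

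A short computation of $\|S_{0,\tau}\varphi\|_{L^2(d\mu)}^2$ followed by the change of variables $\sigma=\lambda-\tau$ reduces the contraction bound to the elementary inequality $(|\sigma|+|\tau|)^{-|\alpha|}\le|\sigma|^{-|\alpha|}$, so $\|S_{0,\tau}\varphi\|_{L^2(d\mu)}\le\|\varphi\|_{L^2(d\mu)}$; in particular $e^{-i\tau\z_{d+1}}\in\cM(DA)$ with contractive multiplier operator. Together with $e^{-i\tau_1\z_{d+1}}e^{-i\tau_2\z_{d+1}}=e^{-i(\tau_1+\tau_2)\z_{d+1}}$ and $e^0\equiv1$, this makes $\{T_t\}_{t\ge0}$ a semigroup of contractions on $DA$. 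The one point calling for care is exactly this identification with the weighted shift $S_{0,\tau}$: one has to pass $e^{-i\tau\z_{d+1}}$ through the Paley--Wiener integral and carry the Jacobian of the translation correctly through the $\lambda$-dependent weight $|\lambda|^d$; everything after that is routine or standard.

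For strong continuity I appeal to Lemma~\ref{semigroup:lemma}: uniform boundedness is the contractivity just proved, so it suffices to show $e^{-i\tau\z_{d+1}}f\to f$ in $DA$ as $\tau\to0^-$ for $f$ in the dense set $\sS(\cD)$, $\cD$ as in Lemma~\ref{operator-domain}. For $f=\sS\varphi$ with $\varphi$ compactly supported in $\bbN_0^d\times\bbR_-$ and continuous in $\lambda$, the functions $S_{0,\tau}\varphi$ converge to $\varphi$ pointwise $d\mu$-a.e.\ as $\tau\to0^-$, and for $|\tau|$ small they are all supported in one fixed compact subset of $\bbN_0^d\times\bbR_-$ on which they are uniformly bounded; dominated convergence then gives $\|e^{-i\tau\z_{d+1}}f-f\|_{DA}=\|S_{0,\tau}\varphi-\varphi\|_{L^2(d\mu)}\to0$.

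Finally, to identify the infinitesimal generator $G$ of $\{T_t\}_{t\ge0}$ I use the resolvent formula for $C_0$ contraction semigroups: for $\lambda>0$ one has $(0,+\infty)\subseteq r(G)$ and $(\lambda-G)^{-1}g=\int_0^{\infty}e^{-\lambda t}T_tg\,dt$, the Bochner integral converging absolutely in $DA$ since $\|T_tg\|_{DA}\le\|g\|_{DA}$. Because $DA$ is a reproducing kernel Hilbert space, point evaluations are continuous, so evaluating this identity at $(\z,\z_{d+1})\in\cU$ and computing the scalar integral $\int_0^{\infty}e^{-(\lambda-i\z_{d+1})t}\,dt=(\lambda-i\z_{d+1})^{-1}$ (convergent because $\Re(\lambda-i\z_{d+1})=\lambda+\Im\z_{d+1}>0$) yields $(\lambda-G)^{-1}g=g/(\lambda-i\z_{d+1})$ as an element of $DA$. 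Hence $\Dom(G)=\ran\big((\lambda-G)^{-1}\big)=\{f\in DA:\z_{d+1}f\in DA\}$, and for $f\in\Dom(G)$, setting $g=(\lambda-i\z_{d+1})f\in DA$, we get $Gf=\lambda f-g=i\z_{d+1}f$. Thus $G$ is exactly the multiplier operator with symbol $i\z_{d+1}$, which is the claim.
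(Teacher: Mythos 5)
Your proof is correct. For the contractivity, the semigroup property, and the strong continuity you follow essentially the same route as the paper: transfer to $L^2(\bbN_0^d\times\bbR_-,d\mu)$ via the Paley--Wiener isometry, identify the multiplier with the weighted translation $S_{0,\tau}$, reduce the contraction bound to $(|\sigma|+|\tau|)^{-|\alpha|}\le |\sigma|^{-|\alpha|}$, and invoke Lemma~\ref{semigroup:lemma} together with dominated convergence on the dense set coming from Lemma~\ref{operator-domain}. Where you genuinely diverge is in identifying the generator: the paper simply writes the difference quotient $\lim_{\tau\to 0^-}(e^{-i\tau\z_{d+1}}-1)f/(-\tau)=i\z_{d+1}f$ without specifying the domain or justifying that the limit holds in the $DA$-norm, whereas you compute the resolvent $(\lambda-G)^{-1}g=g/(\lambda-i\z_{d+1})$ via the Laplace-transform formula for $C_0$ contraction semigroups, using that point evaluations are bounded on $DA$ to evaluate the Bochner integral, and then read off both $\Dom(G)=\{f\in DA:\z_{d+1}f\in DA\}$ and $Gf=i\z_{d+1}f$. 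Your route is slightly longer but more complete: it pins down the domain of the generator and avoids having to verify norm convergence of the difference quotients directly, which the paper's one-line argument glosses over.
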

\begin{proof}
From the inversion formula \eqref{inversion-DA} we have 
 \begin{align*}
  e^{-i\tau \z_{d+1}}&f(\z,\z_{d+1})= \frac{1}{(2\pi i)^{d+1}}\int_{\bbN_0^d\times\bbR_-} \z^\alpha e^{-i
  (\lambda+\tau)\z_{d+1}} \overline{\varphi(\alpha,\lambda)}\,|\lambda|^d\, d\alpha d\lambda\\
  &=\frac{1}{(2\pi i)^{d+1}}\int_{\bbN_0^d\times\bbR_-} \z^\alpha e^{-i
  \lambda \z_{d+1}} \overline{\varphi(\alpha,\lambda-\tau)}\big|\frac{\lambda- \tau}\lambda\big|^d\chi_{(-\infty,\tau)}(\lambda)\,|\lambda|^d\, d\alpha d\lambda
 \end{align*}
where $\chi_{(-\infty,\tau)}$ is the characteristic function of the interval $(-\infty,\tau)$. Thus, by \eqref{isometry} we get
\begin{align*}
 \| e^{-i\tau \z_{d+1}}f\|_{DA}^2&=\int_{\bbN_0^d\times\bbR_-}|\varphi(\alpha,\lambda-\tau)|^2 \big|\frac{\lambda-\tau}\lambda\big|^{2d}\chi_{(-\infty,\tau)}(\lambda)\, d\mu(\alpha,\lambda)\\
 &=\int_{\bbN_0^d\times\bbR_-} |\varphi(\alpha,\lambda)|^2\big|\frac{\lambda}{\lambda+\tau}\big|^{|\alpha|}\, d\mu(\alpha,\lambda)\\
 &\leq \int_{\bbN_0^d\times\bbR_-} |\varphi(\alpha,\lambda)|^2\, d\mu(\alpha,\lambda)\\
 &=\|f\|_{DA}^2.
\end{align*}
Hence, we conclude that $ e^{-i\tau \z_{d+1}}$ is a contractive multiplier. The semigroup property is automatically satisfied. Since $ e^{-i\tau \z_{d+1}}f $ converges in norm to $f$ for all $f\in \cD$ and the semigroup has uniformly bounded seminorm, it is strongly continuous by Lemma \ref{semigroup:lemma}. Then, by definition, the infinitesimal generator is given by
\begin{align*}
    \lim_{\tau\to 0^-}\frac{(e^{-i\tau \z_{d+1}}-1)f}{-\tau}=i\z_{d+1}f
\end{align*}
and the proof is concluded.
\end{proof}


\begin{lemma} \label{boundedmult}
Let $(\gamma,\tau)\in\bbN_0^d\times\bbR_-, \tau < 0$. Then, the multiplier operator associated to the muliplier $\z^{\gamma} e^{-i\tau \z_{d+1}}$ extends to a bounded operator on $DA$.
\end{lemma}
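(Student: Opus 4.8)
The plan is to compute the action of the multiplier operator associated to $\z^\gamma e^{-i\tau\z_{d+1}}$ explicitly on the Fourier side, i.e.\ conjugated by the isometry $\sS$ of Theorem \ref{isometry-Phi}, just as was done for $\z^\gamma$ alone in Lemma \ref{shift-densely-defined} and for $e^{-i\tau\z_{d+1}}$ alone in Lemma \ref{exp-multiplier}. Starting from the inversion formula \eqref{inversion-DA} for $f$ determined by $\varphi\in\cD$, I would multiply by $\z^\gamma e^{-i\tau\z_{d+1}}$, shift the summation index $\alpha\mapsto\alpha-\gamma$ and the integration variable $\lambda\mapsto\lambda-\tau$ (picking up the Jacobian-type factor $|\lambda-\tau|^d/|\lambda|^d$ from the $|\lambda|^d\,d\lambda$ weight and the constraints $\alpha\geq\gamma$, $\lambda<\tau$), and read off that
\begin{equation*}
\sS^{-1}\big(\z^\gamma e^{-i\tau\z_{d+1}} f\big)(\alpha,\lambda)
= \frac{|\lambda-\tau|^d}{|\lambda|^d}\,\varphi(\alpha-\gamma,\lambda-\tau)\,\chi_{\{\lambda<\tau,\ \alpha\geq\gamma\}}
= (S_{\gamma,\tau}\varphi)(\alpha,\lambda),
\end{equation*}
which is precisely the operator $S_{\gamma,\tau}$ appearing in Theorem \ref{shift-adjoint}.

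Having identified the operator, boundedness reduces to an $L^2(\d\mu)$ estimate for $S_{\gamma,\tau}$. Using \eqref{isometry} and the change of variables above, I would compute
\begin{equation*}
\|\z^\gamma e^{-i\tau\z_{d+1}} f\|_{DA}^2
= c_d \int_{\bbN_0^d\times\bbR_-} |\varphi(\alpha,\lambda)|^2\,
\frac{(\alpha+\gamma)!}{\alpha!}\Big(\frac{2}{|\lambda+\tau|}\Big)^{|\gamma|}
\Big(\frac{|\lambda|}{|\lambda+\tau|}\Big)^{|\alpha|}
\Big(\frac{|\lambda+\tau|}{|\lambda|}\Big)^{2d}\, d\mu(\alpha,\lambda),
\end{equation*}
after absorbing all the measure factors; the point is that since $\tau<0$ and $\lambda<0$ we have $|\lambda+\tau|=|\lambda|+|\tau|>|\lambda|$, so the ratio $|\lambda|/|\lambda+\tau|<1$ controls the factor $\big(|\lambda|/|\lambda+\tau|\big)^{|\alpha|}$, while the remaining factor $\frac{(\alpha+\gamma)!}{\alpha!}\big(2/|\lambda+\tau|\big)^{|\gamma|}$ is not uniformly bounded in $\alpha$ on its own. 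The resolution is to combine them: one checks that $\sup_{\alpha\in\bbN_0^d}\frac{(\alpha+\gamma)!}{\alpha!}\big(|\lambda|/|\lambda+\tau|\big)^{|\alpha|}$ is finite for each fixed $\lambda$ (a polynomial in $\alpha$ times a geometric decay), and in fact is bounded \emph{uniformly} in $\lambda<0$ because the geometric ratio is bounded away from $1$ precisely when $|\lambda|$ is large, where the polynomial growth would otherwise hurt, whereas for $|\lambda|$ bounded the factor $\big(2/|\lambda+\tau|\big)^{|\gamma|}\leq\big(2/|\tau|\big)^{|\gamma|}$ is already controlled. Making this two-regime bookkeeping precise is the only real work.

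The main obstacle, then, is the uniform-in-$\lambda$ control of the product $\frac{(\alpha+\gamma)!}{\alpha!}\big(2/|\lambda+\tau|\big)^{|\gamma|}\big(|\lambda|/|\lambda+\tau|\big)^{|\alpha|}$ over all $\alpha\in\bbN_0^d$ and all $\lambda<0$: individually the first factor grows polynomially in $|\alpha|$ and the second may be close to $1$, so one must exploit that the "bad" region $|\lambda|$ large is exactly where the geometric factor decays fast enough to kill the polynomial. A clean way to organize this is to bound $\frac{(\alpha+\gamma)!}{\alpha!}\leq C_\gamma(1+|\alpha|)^{|\gamma|}$ and then use $\sup_{t\in(0,1)}(1+k)^{|\gamma|}t^k\leq C_\gamma' (1-t)^{-|\gamma|}$ with $t=|\lambda|/(|\lambda|+|\tau|)$, so that $(1-t)^{-|\gamma|}=(1+|\lambda|/|\tau|)^{|\gamma|}$, which when multiplied by $\big(2/(|\lambda|+|\tau|)\big)^{|\gamma|}=\big(2/|\tau|\big)^{|\gamma|}(1+|\lambda|/|\tau|)^{-|\gamma|}$ cancels exactly, leaving the constant $\big(2/|\tau|\big)^{|\gamma|}C_\gamma C_\gamma'$. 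Once that constant bound is in hand, the displayed integral is $\leq C_{\gamma,\tau}\|\varphi\|_{L^2(d\mu)}^2 = C_{\gamma,\tau}\|f\|_{DA}^2$ for all $f$ in the dense subspace $\sS(\cD)$, and since the multiplier operator is closed (same argument as in Lemma \ref{shift-densely-defined}, or simply because a bounded operator on a dense set extends), it extends to a bounded operator on $DA$, completing the proof. (The unitary-equivalence and adjoint formula claims of Theorem \ref{shift-adjoint} then follow from this identification together with a routine computation of $\langle S_{\gamma,\tau}\varphi,\psi\rangle_{L^2(d\mu)}$.)
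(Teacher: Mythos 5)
Your approach is essentially the paper's: conjugate the multiplication operator by the isometry of Theorem \ref{isometry-Phi}, identify it with the weighted shift $S_{\gamma,\tau}$, and reduce boundedness to a supremum bound on the resulting weight; only the final estimation of that supremum differs. Two remarks. First, your displayed norm identity carries a spurious factor $\bigl(|\lambda+\tau|/|\lambda|\bigr)^{2d}$: the factor $|\lambda-\tau|^{2d}/|\lambda|^{2d}$ coming from $|S_{\gamma,\tau}\varphi|^2$ combines with the $|\lambda|^{2d}$ in $d\mu$ to give $|\lambda-\tau|^{2d}$, which after the change of variables is exactly the $|\lambda|^{2d}$ of the new $d\mu$; the correct weight is $\frac{(\alpha+\gamma)!}{\alpha!}\,2^{|\gamma|}\,|\lambda|^{|\alpha|}/|\lambda+\tau|^{|\alpha|+|\gamma|}$, with no $2d$-power left over. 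Had the extra factor genuinely been there it would have destroyed the bound (it blows up as $\lambda\to0^-$ already for $\alpha=0$), but you silently drop it in the rest of the argument, which proceeds from the correct weight, so this is a slip in the display rather than in the proof. Second, your heuristic ``two-regime'' description is stated backwards --- $t=|\lambda|/(|\lambda|+|\tau|)$ tends to $1$, not away from $1$, as $|\lambda|\to\infty$ --- but the rigorous estimate that follows is correct and is in fact a little cleaner than the paper's: the paper maximizes the weight over $\lambda$ by calculus (critical point $\lambda=\frac{|\alpha|}{|\gamma|}\tau$) and then controls the resulting expression in $\alpha$ via Stirling, whereas your elementary inequality $\sup_{k\ge0}(1+k)^{|\gamma|}t^{k}\le C_\gamma'(1-t)^{-|\gamma|}$, combined with the exact cancellation $(1-t)^{-|\gamma|}\bigl(2/(|\lambda|+|\tau|)\bigr)^{|\gamma|}=(2/|\tau|)^{|\gamma|}$, yields the uniform bound in one stroke.
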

\begin{proof}
Let $\cD$ be as in Lemma \ref{operator-domain} and let $f\in \cD $. We have
 \begin{align*}
  &\z^\gamma e^{-i\tau \z_{d+1}} f(\z,\z_{d+1}) =\frac{1}{(2\pi i)^{d+1}}\int_{\bbN_0^d\times\bbR_-} \z^{\alpha+\gamma} e^{-i
  (\lambda+\tau)\z_{d+1}} \overline{\varphi(\alpha,\lambda)}\,|\lambda|^d\, d\alpha d\lambda\\
  &=\frac{1}{(2\pi i)^{d+1}}\int_{\bbN_0^d\times\bbR_-} \!\!\!\!\!\!\!\!\z^{\alpha} e^{-i
  \lambda \z_{d+1}} \overline{\varphi(\alpha-\gamma,\lambda-\tau)}\,|\lambda-\tau|^d \chi_{(-\infty,\tau)}(\lambda)\chi_{\{\alpha\geq\gamma\}}(\alpha)\, d\alpha d\lambda\\
  &=\frac{1}{(2\pi i)^{d+1}}\int_{\bbN_0^d\times\bbR_-} \!\!\!\!\!\!\!\!\z^{\alpha} e^{-i
  \lambda \z_{d+1}} \overline{\varphi(\alpha-\gamma,\lambda-\tau)}\,\big|\frac{\lambda-\tau}\lambda\big|^d\chi_{(-\infty,\tau)}(\lambda)\chi_{\{\alpha\geq\gamma\}}(\alpha) |\lambda|^d d\alpha d\lambda
 \end{align*}
where $\chi_{\{\alpha\geq\gamma\}}(\alpha)$ is the characteristic function of the set $\{\alpha\in\bbN_0^d: \alpha_i\geq\gamma_i, i=1,\ldots,d\}$. Hence, setting $m_{\gamma,\tau}(\z,\z_{d+1})=\z^\gamma e^{-i\tau \z_{d+1} }$,
\begin{align*}
\begin{split}
 \|m_{\gamma,\tau}f\|_{DA}^2&=\int_{\bbN_0^d\times\bbR_-}|\varphi(\alpha-\gamma,\lambda-\tau)|^2\,\big|1-\frac\tau\lambda\big|^{2d}\chi_{(-\infty,\tau)}(\lambda)\chi_{\{\alpha\geq\gamma\}}(\alpha)\, d\mu(\alpha,\lambda)\\
 &=\int_{\bbN_0^d\times\bbR_-}|\varphi(\alpha,\lambda)|^2 2^{|\gamma|}\frac{|\lambda|^{|\alpha|}}{|\lambda+\tau|^{|\alpha+\gamma|}}\frac{(\alpha+\gamma)!}{\alpha!}\, d\mu(\alpha,\lambda).
\end{split}
\end{align*}

Therefore, 
\[  \|m_{\gamma,\tau} \|_{\cM(DA)}^2 \leq \sup_{\alpha \in \bbN_0^d, \lambda < 0 } 2^{|\gamma|}\frac{|\lambda|^{|\alpha|}}{|\lambda+\tau|^{|\alpha+\gamma|}}\frac{(\alpha+\gamma)!}{\alpha!}. \]

In order to calculate this supremum we first maximize over $ \lambda $. For symmetry we can consider $\lambda>0$ and $\tau>0$. We find that when $|\alpha|\neq 0$ the maximum is achieved for $\lambda = \frac{|\alpha|}{|\gamma|}\tau,$ whereas for $\alpha = 0 $ the expression is decreasing in $\lambda$ and therefore the maximum is achieved at $\lambda = 0$.

Substituting the value for $\lambda $ in the expression above we find 
\begin{align*}
	2^{|\gamma|}& \frac{|\alpha|^{|\alpha|}\tau^{|\alpha|}}{|\gamma|^{|\alpha|}} \frac{1}{\tau^{|\alpha+\gamma|}\big(\frac{|\alpha|}{|\gamma|}+1\big)^{|\alpha+\gamma|}}\frac{(\alpha+\gamma)!}{\alpha!}  =\\
	&=\big( \frac{2}{\tau }\big)^{|\gamma|} \Big( 1 + \frac{|\gamma|}{|\alpha|}  \Big)^{-|\alpha|} \frac{(\alpha + \gamma ) !}{\alpha ! } \Big( \frac{|\alpha|}{|\gamma|} +1  \Big)^{-|\gamma|} .
\end{align*}

Note that the first factor is constant, the second is decreasing in $|\alpha|$ and tends to $e^{-|\gamma|}$ as $|\alpha| \to + \infty .$   For the last term notice that 
\begin{align*}
 \frac{(\alpha + \gamma )! }{\alpha!} \Big( \frac{|\alpha|}{|\gamma|} + 1\Big)^{-|\gamma|} & = \prod_{i=1}^d \frac{(\alpha_i + \gamma_i)!}{\alpha_i ! \Big( \frac{|\alpha|}{|\gamma|}+1 \Big)^{\gamma_i} } \\
 & \leq   \prod_{i=1}^d \frac{(|\alpha| + \gamma_i)!}{|\alpha| ! \Big( \frac{|\alpha|}{|\gamma|}+1 \Big)^{\gamma_i} }  \\
 & \leq \gamma! \prod_{i=1}^d \Big( \frac{|\alpha| + 1}{\frac{|\alpha|}{|\gamma|}+1} \Big)^{\gamma_i} \\
 	&  \leq \gamma! |\gamma|^{|\gamma|}.
\end{align*}

This proves that the multiplier operator associated to $m_{\gamma,\alpha}$ is bounded with norm less than
\[ \max \Big\{ \gamma !  \big( \frac{2}{\tau}\big)^{|\gamma|}, \gamma!   \big( \frac{2}{\tau}\big)^{|\gamma|} \frac{|\gamma|!}{\sqrt{2\pi |\gamma|}} \Big\}, \] 
where we used Stirling's asymptotic.
\end{proof}

We now prove Theorem \ref{shift-adjoint}

\begin{proof}
The proof of the unitary equivalence of the two operators follows essentially from the computation in Lemma \ref{boundedmult}. 
For the computation of the adjoint operator, if $\langle\cdot,\cdot\rangle$ denotes the inner product in $L^2(\bbN_0^d\times\bbR_-, d\mu)$, we have
\begin{align*}
    \langle S^*_{\gamma,\tau}\varphi,\psi\rangle&= \langle \varphi, S_{\gamma,\tau}\psi\rangle\\
    &=\int_{\{\alpha\geq\gamma\}}\int_{\{\lambda<\tau\}}\varphi(\alpha,\lambda) \frac{|\lambda-\tau|^d}{|\lambda|^d}\overline{\psi(\alpha - \gamma, \lambda - \tau)}\, d\mu(\alpha,\lambda)\\
    &=\int_{\bbN_0^d\times\bbR_-} \frac{|\lambda+\tau|^{d-|\alpha|-|\gamma|}}{|\lambda|^{d-|\alpha|}} \frac{(\alpha+\gamma)!}{\alpha ! } 2^{|\gamma|} \varphi(\alpha+\gamma, \lambda + \tau ) \overline{\psi(\alpha,\lambda)}\, d\mu(\alpha,\lambda)
\end{align*}
and the conclusion follows.
\end{proof}

\section{The von Neumann type inequality}\label{lifting-section}

In this section we prove our main results Theorem \ref{lifting-thm} and Theorem \ref{thm-vonNeumann}. We first prove a couple of lemmas.

\begin{lemma}\label{commutation}
Let $T$ be a bounded operator on a Hilbert space $\cH$ and let $U$ be a densely defined closed operator. Assume that $U$ is the infinitesimal generator of a (unique) $C_0$ semigroup  $\{e^{\tau U}\}_{\tau>0}$ and assume that $T$ and $U$ strongly commute. Then, $T$ and $e^{\tau U}$ commute for all $\tau >0 $.
\end{lemma}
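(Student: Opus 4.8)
The plan is to exploit the fact that $e^{\tau U}$ is built from the resolvents of $U$ via a limiting procedure, since these resolvents are bounded operators and strong commutation with $T$ is easy to propagate through bounded operators. Concretely, I would proceed as follows.

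First I would record the elementary observation that if $T$ strongly commutes with $U$ (in the sense of the paper: $T(\cH)\subseteq\Dom(U)$ and $TUv=UTv$ for all $v\in\Dom(U)$), then $T$ commutes with the resolvent $(\lambda\Id-U)^{-1}$ for every $\lambda$ in the resolvent set. Indeed, fix such a $\lambda$ and $w\in\cH$; put $v=(\lambda\Id-U)^{-1}w\in\Dom(U)$. Then $Tv\in\Dom(U)$ by strong commutation, and
\[
(\lambda\Id-U)Tv=\lambda Tv-UTv=\lambda Tv-TUv=T(\lambda\Id-U)v=Tw,
\]
so $Tv=(\lambda\Id-U)^{-1}Tw$, i.e.\ $T(\lambda\Id-U)^{-1}w=(\lambda\Id-U)^{-1}Tw$. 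Hence $T$ commutes with every power of the resolvent as well.

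Next I would invoke the standard exponential (Yosida) approximation of the semigroup: for $\tau>0$,
\[
e^{\tau U}v=\lim_{n\to\infty}e^{\tau U_n}v,\qquad U_n:=nU(n\Id-U)^{-1}=n^2(n\Id-U)^{-1}-n\Id,
\]
the limit holding in norm for every $v\in\cH$, where each $U_n$ is a \emph{bounded} operator and $e^{\tau U_n}=\sum_{k\ge0}\tau^k U_n^k/k!$ converges in operator norm. Since $U_n$ is a norm-convergent power series in the resolvent, the first step shows $T U_n=U_n T$, and therefore $T e^{\tau U_n}=e^{\tau U_n}T$ for every $n$ (each partial sum commutes with $T$, and $T$ is bounded hence continuous). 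Finally, applying $T$ (bounded, hence continuous) to the limit gives, for every $v\in\cH$,
\[
Te^{\tau U}v=T\lim_{n\to\infty}e^{\tau U_n}v=\lim_{n\to\infty}Te^{\tau U_n}v=\lim_{n\to\infty}e^{\tau U_n}Tv=e^{\tau U}Tv,
\]
which is the claim.

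The only genuinely delicate point is making sure the Yosida approximation is available in the form stated; this is classical for $C_0$ semigroups on a Banach space (the semigroup here need not be contractive, but it is $C_0$, which suffices for the approximation theorem after the usual rescaling $U\rightsquigarrow U-\omega\Id$ to push the spectrum to the left — and rescaling commutes with $T$ trivially). An alternative, equally clean route, if one prefers to avoid citing Yosida's theorem, is to use the Laplace-transform representation: for $\Re\lambda$ large, $(\lambda\Id-U)^{-1}=\int_0^\infty e^{-\lambda\tau}e^{\tau U}\,d\tau$, and conclude from the injectivity of the Laplace transform that commutation of $T$ with all resolvents forces commutation with $e^{\tau U}$ for a.e., hence by strong continuity every, $\tau>0$. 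I would present the Yosida-approximation argument as the main line since it is the most self-contained.
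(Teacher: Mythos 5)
Your proof is correct, but it takes a genuinely different route from the paper's. You first show that strong commutation of $T$ with $U$ forces $T$ to commute with every resolvent $(\lambda\Id-U)^{-1}$, and then transport this to the semigroup through the Yosida approximants $U_n=nU(n\Id-U)^{-1}$, whose exponentials converge strongly to $e^{\tau U}$. The paper instead picks $\lambda_0$ in the resolvent set of the \emph{bounded} operator $T$ and conjugates: it checks that $Z_\tau:=(T-\lambda_0\Id)^{-1}e^{\tau U}(T-\lambda_0\Id)$ is a $C_0$ semigroup whose generator agrees with $U$ on $\Dom(U)$ (this is where $TUv=UTv$ and $T(\cH)\subseteq\Dom(U)$ enter), and concludes $Z_\tau=e^{\tau U}$ from uniqueness of the semigroup generated by a given operator --- which is exactly the claimed commutation. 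The paper's argument is shorter and avoids the Hille--Yosida approximation machinery altogether, needing only the uniqueness statement already implicit in its Lumer--Phillips discussion; yours is arguably more robust and reusable, since the resolvent-commutation step is the standard notion of commutation for unbounded operators, but it does require citing the generation/approximation theorem for non-contractive $C_0$ semigroups, including the rescaling you correctly flag. Both arguments are complete; your Laplace-transform alternative at the end is also valid.
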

\begin{proof}
Consider some $\lambda_0 \in r(T)$, the resolvent set of the operator $T$. Then an algebraic computation shows that the bounded operators 
\[  Z_\tau : = (T-\lambda_0 \Id)^{-1} e^{\tau U} (T-\lambda_0 \Id) \] 
form a $C_0$-semigroup. For $v\in \Dom(U) $, since $T$ and $U$ strongly commute, $Th-\lambda_0 v \in \Dom(U)$ and \begin{align*} \lim_{\tau \searrow 0} \frac{Z_\tau v - v}{\tau} & = (T-\lambda_0\Id)^{-1} \lim_{\tau \searrow 0} \frac{e^{ \tau U} (T-\lambda_0 \Id)v- (T-\lambda_0 \Id)v}{\tau} \\
& = (T-\lambda_0\Id)^{-1} U (Tv-\lambda_0v) = Uv.
\end{align*}
By the uniqueness of the infinitesimal generator we conclude that $Z_\tau = e^{\tau U}$. In other words, 
\[ e^{\tau U} T = T e^{\tau U}
\]
for all $\tau>0$, as we wished to show.
\end{proof}

\begin{proof}[Proof of Theorem \ref{lifting-thm}]
 Set $A=(A_1\cdots A_d)$,  let $v\in \cH$ and notice that, since
 for $ 1\leq i \leq d$ the operators $A_i$ strongly commute with $A_{d+1}$, 
by \cite[Section 34, Theorem 4(i)]{lax2002functional} we can infer
that  $e^{-i\tau A_{d+1}} A^\alpha v \in \Dom A_{d+1}$. Therefore,
the map $\Theta $ is well-defined. Furthermore,
\begin{align*}
  \|\Theta v\|^2_{\cL^2(\Delta)}&=\int_{\bbN_0^d\times\bbR_{-}}\norm \Theta v(\alpha,\lambda)\norm^2_\Delta  d\mu(\alpha,\lambda)\\
  &=\int_{\bbN_0^d\times\bbR_{-}}\norm e^{-i\lambda A_{d+1}} A^\alpha v\norm_\Delta^2 \frac{|\lambda|^{|\alpha|}}{2^{|\alpha|-1}\alpha!}\, d\alpha d\lambda\\
  &=\int_{\bbN_0^d\times\bbR_{-}}\Im \langle A_{d+1} e^{-i\lambda A_{d+1}} A^\alpha v, e^{-i\lambda A_{d+1}}A^\alpha v \rangle_\cH\, \frac{|\lambda|^{|\alpha|}}{2^{|\alpha|-1}\alpha!}\,d\alpha d\lambda\\\
  &\qquad-\int_{\bbN_0^d\times\bbR_{-}} \sum_{i=1}^d \langle A^{\alpha+e_i} e^{-i\lambda A_{d+1}}v, A^{\alpha +e_i}e^{-i\lambda A_{d+1}}v \rangle_{\cH}\, \frac{|\lambda|^{|\alpha|}}{2^{|\alpha|+1}\alpha!}\,d\alpha d\lambda.
 \end{align*}
Now we observe that for $v\in \Dom A_{d+1}$ we have
\begin{align*} 
 \partial_\lambda\|e^{-i\lambda A_{d+1}}v\|^2_{\cH} & =\langle -iA_{d+1}e^{-i\lambda A_{d+1}}v, e^{-i\lambda A_{d+1}}v\rangle_{\cH}+\langle e^{-i\lambda A_{d+1} }v,-i A_{d+1}e^{-i\lambda A_{d+1}}v\rangle_{\cH}\\
 &= 2\Im \langle A_{d+1} e^{-i\lambda A_{d+1}}  v, e^{-i\lambda A_{d+1}}v \rangle_\cH.
\end{align*}
From the fact that $(A_1, \dots, A_{d+1})$ is strongly Siegel--dissipative we deduce that $iA_{d+1}+\varepsilon \Id $ is maximal dissipative. In this case the Lumer--Philips Theorem guarantees that $ iA_{d+1}+\varepsilon \Id $ generates a contraction semigroup; in other words, for any $v\in \cH, \tau < 0 $,
\[ e^{-\tau \varepsilon} \norm e^{-i\tau A_{d+1}} v \norm_{\cH} \leq \|v\|_{\cH} . \]
Therefore, $\norm e^{-i\tau A_{d+1}} v \norm_{\cH} $ decays exponentially and we can integrate by parts as follows,
\begin{align*}
 \|\Theta v\|^2_{\cL^2(\Delta)}&=\sum_{\alpha\in \bbN_0^d}\int_{-\infty}^0 \partial_\lambda \|e^{-i\lambda A_{d+1}} A^\alpha v\|^2_{\cH}\, \frac{|\lambda|^{|\alpha|}}{2^{|\alpha|}\alpha!}\, d\lambda\\
 &\qquad - \sum_{\alpha\in \bbN_0^d}\int_{-\infty}^0 \sum_{i=1}^d \|A^{\alpha +e_i}e^{-i\lambda A_{d+1}}v\|^2_{\cH}\, \frac{|\lambda|^{|\alpha|}}{2^{|\alpha|+1}\alpha!}d\lambda\\
 &=\|v\|^2_{\cH}+\sum_{\alpha\in \bbN_0^d \setminus \{ 0 \}}\int_{-\infty}^0\partial_\lambda \|e^{-i\lambda A_{d+1}} 2^{-\frac{|\alpha|}{2}}A^\alpha v\|^2_{\cH}\, \frac{|\lambda|^{|\alpha|}}{\alpha!}d\lambda\\
  &\qquad - \sum_{i=1}^d \sum_{\alpha\in \bbN_0^d }\int_{-\infty}^0  \|e^{-i\lambda A_{d+1}} 2^{-\frac{|\alpha|+1}{2}}A^{\alpha + e_i }v\|^2_{\cH}\, \frac{|\lambda|^{|\alpha|}}{\alpha!}d\lambda\\
  &=\|v\|^2_{\cH}+\sum_{\alpha\in \bbN_0^d \setminus \{ 0 \}}\int_{-\infty}^0 \|e^{-i\lambda A_{d+1}}2^{-\frac{|\alpha|}{2}}A^\alpha v\|_\cH^2\frac{|\alpha||\lambda|^{|\alpha|-1}}{\alpha!}\, d\lambda\\
  & \qquad - \sum_{\alpha\in \bbN_0^d }\int_{-\infty}^0 \sum_{i=1}^d \|e^{-i\lambda A_{d+1}} 2^{-\frac{|\alpha|+1}{2}}A^{\alpha + e_i }v\|^2_{\cH}\, \frac{|\lambda|^{|\alpha|}}{\alpha!}d\lambda\\
  &=\|v\|^2_{\cH}.
\end{align*}
To see why the two sums cancel each other out, notice that 
\[ \|e^{-i\lambda A_{d+1}} 2^{-\frac{|\alpha|+1}{2}}A^{\alpha + e_i }v\|^2_{\cH}\, \frac{|\lambda|^{|\alpha|}}{\alpha!} = \|e^{-i\lambda A_{d+1}} 2^{-\frac{|\alpha+e_i|}{2}}A^{\alpha + e_i }v\|^2_{\cH}\, \frac{(\alpha_i+1)|\lambda|^{|\alpha+e_i|-1}}{(\alpha+e_i)!}  \] 
and use the combinatorial identity 
\[ \sum_{i=1}^d\sum_{\alpha\in \bbN_0^d } c_{\alpha+e_i} (\alpha_i+1) = \sum_{i=1}^{d}\sum_{\beta\, : \, \beta_i \geq 1 } c_\beta \beta_i = \sum_{\beta\in \bbN_0^d\backslash\{0\}} c_\beta \sum_{i\,:\, \beta_i \geq 1 } \beta_i = \sum_{\beta \in \bbN_0^d\backslash\{0\}} c_\beta |\beta|. \]

We now lift the operator $S_{\gamma,\tau}^*$ to the space $\cL^2(\Delta)$ by tensoring with the identity. 
Explicitly, for $ g : \bbN_0^d \to \cH_\Delta $, 
\[ [ S_{\gamma,\tau }^* \otimes \Id  ]g  (\alpha, \tau) =  \frac{|\lambda+\tau|^{d-|\alpha|-|\gamma|}}{|\lambda|^{d-|\alpha|}} \frac{(\alpha+\gamma)!}{\alpha ! } 2^{|\gamma|} g(\alpha+\gamma, \lambda + \tau ).   \]

We now show that the operator $\Theta$ is an intertwining operator for
the couple $S^*_{\gamma,\tau}\otimes \Id $ and $e^{-i\tau A_{d+1}}
A^\gamma$ where $A=(A_1,\dots,A_d)$.  For, we have
\[ \Theta ( e^{-i\tau A_{d+1}} A^\gamma v ) (\alpha,\lambda ) = \frac{|\lambda|^{|\alpha|-d}}{\alpha!2^{|\alpha|-\frac12}}  e^{-i(\tau+\lambda) A_{d+1}} A^{\alpha+\gamma} v , \]
whereas, on the other hand,
\begin{align*}
   ( [S^*_{\gamma,\tau}\otimes \Id] \Theta v ) &(\alpha,\lambda)  = \frac{|\lambda+\tau|^{d-|\alpha|-|\gamma|}}{|\lambda|^{d-|\alpha|}} \frac{(\alpha+\gamma)!}{\alpha !  } 2^{|\gamma|} \Theta v (\alpha+\gamma, \lambda + \tau ) \\
   & =  \frac{|\lambda+\tau|^{d-|\alpha|-|\gamma|}}{|\lambda|^{d-|\alpha|}} \frac{(\alpha+\gamma)!}{\alpha !  } 2^{|\gamma|} \frac{|\lambda+\tau|^{|\alpha|+|\gamma|-d}}{(\alpha+\gamma)! 2^{|\alpha|+|\gamma|-\frac12}} e^{-i(\tau+\lambda) A_{d+1}} A^{\alpha+\gamma} v \\
   & = \frac{|\lambda|^{|\alpha|-d}}{\alpha!2^{|\alpha|-\frac12}}  e^{-i(\tau+\lambda) A_{d+1}} A^{\alpha+\gamma} v \\
   & =  \Theta ( e^{-i\tau A_{d+1}} A^\gamma v ) (\alpha,\lambda ).
\end{align*} 
In conclusion, the diagram \eqref{diagram-Siegel} commutes and $(ii)$ is proved.
\end{proof}

Finally, we prove our von Neumann type inequality.

\begin{proof}[Proof of Theorem \ref{thm-vonNeumann}]
First suppose that the tuple $(A_1,\dots A_{d+1}) $ is strongly
Siegel-dissipative. 
Let $p(z)=\sum_{|\gamma|\le N} c_\gamma z^\gamma$ be a polynomial in
$z=(z_1,\dots,z_d)$. Let
$(\tau_1,\dots,\tau_d)\in\bbR_-^d$ be fixed and let $M_j$ be the
operator on $\cH$ given by $M_j=e^{-i\tau_j A_{d+1}}A_j$, $j=1,\dots,d$. Then,
$$
p(M_1,\dots,M_d)= \sum_{|\gamma|\le N} c_\gamma
e^{-i\big(\sum_{j=1}^d\gamma_j\tau_j\big)A_{d+1}}A^\gamma =:
\sum_{|\gamma|\le N} c_\gamma e^{-i\tau_\gamma A_{d+1}} A^\gamma ,
$$
so that
\begin{align*}
 \Theta p(M_1,\dots,M_d) & =   \sum_{|\gamma|\le N} c_\gamma \Theta
e^{-i\tau_\gamma A_{d+1}}A^{\gamma}\\
&=  \sum_{|\gamma|\le N} c_\gamma [S_{\gamma,\tau_\gamma}^*\otimes \Id ]\Theta\\
& = \Big(\Big[ \sum_{|\gamma|\le N} c_\gamma  S_{\gamma,\tau_\gamma}^* \Big] \otimes \Id\Big)\Theta .
\end{align*}
Letting $m_j$ be the multiplier on $DA$ given by $m_j(\z,\z_{d+1})=
\z_je^{-i\tau_j\z_{d+1}}$, $j=1,\dots,d$,  we have
\begin{align*}
\big\Vert  p(M_1,\dots,M_d)(v) \big\Vert_\cH
& = \big\Vert \Theta p(M_1,\dots,M_d) (v)\big\Vert_{\cL^2(\Delta)} \\
& = \big\Vert \Big(\Big[ \sum_{|\gamma|\le N} c_\gamma
S_{\gamma,\tau_\gamma}^* \Big] \otimes \Id\Big)\Theta (v)\big\Vert_{\cL^2(\Delta)} \\
  & \le \Big\Vert \Big[ \sum_{|\gamma|\le N} c_\gamma
    S_{\gamma,\tau_\gamma}^*  \Big] \otimes \Id  \Big\Vert_{\cB(\cL^2(\Delta))} 
    \| \Theta (v)\|_{\cL^2(\Delta)} \\
  & = \Big\Vert  \sum_{|\gamma|\le N} c_\gamma
    S_{\gamma,\tau_\gamma}^*   \Big\Vert_{\cB(L^2(\bbN_0\times\bbR_-,d\mu))} \|v\|_\cH,
\end{align*}
so that
\begin{align*}
  \big\Vert  p(M_1,\dots,M_d) \big\Vert_{\cB(\cH)}
  & \le \Big\Vert  \sum_{|\gamma|\le N} c_\gamma
    S_{\gamma,\tau_\gamma}^*
    \Big\Vert_{\cB(L^2(\bbN_0\times\bbR_-,d\mu))}\\
  & = \Big\Vert  \sum_{|\gamma|\le N} \overline{c_\gamma}
    S_{\gamma,\tau_\gamma}
    \Big\Vert_{\cB(L^2(\bbN_0\times\bbR_-,d\mu))}\\
  & =
  \Big\Vert \sum_{|\gamma|\le N}  c_\gamma  \z^{\gamma}
  e^{-i\tau_\gamma \z_{d+1} }
  \Big\Vert _{\cM(DA)} \\
  & =\big\| p(m_1,\dots,m_d)\big\|_ {\cM(DA)}.
\end{align*}

In the general case let $ (A_1, \dots A_{d+1})$ be Siegel-dissipative. Then, if we replace $A_{d+1}$ by $A_{d+1}+i\varepsilon \Id $ we get a strongly  Siegel-dissipative tuple of operators. Applying the von Neumann type inequality we have 
\begin{equation*}
\| p( e^{\varepsilon\tau_1} M_1 , \dots, e^{\varepsilon\tau_k}M_d  ) \|_{\cB(\cH)}\leq \|p(m_1,\dots, m_d)  \|_{\cM(DA)},
\end{equation*}
However, the right hand side of the inequality does not depend on $\varepsilon$ and the left hand side converges in the operator norm as $\varepsilon
 \to 0 ^+$. In fact, it suffices to prove the convergence for each term of the polynomial separately; we have
 \begin{equation*} \norm e^{-i\tau A_{d+1}}e^{\tau \varepsilon} -  e^{-i\tau A_{d+1}} \norm  = \norm  e^{-i\tau A_{d+1}} \norm (1-e^{\tau\varepsilon} ) 
  \leq (1-e^{\tau\varepsilon}) \overset{\tau \nearrow
    0}{\longrightarrow} 0.
\end{equation*}
 Therefore, we can pass to the limit and obtain the desired inequality.
 \end{proof}

\medskip

\emph{Acknowledgments.}\, We would like to thank the anonymous referee for his/her helpful comments.

\ms 

\bibliography{vonNeumann-bib}
 \bibliographystyle{amsalpha}

\end{document}